\numberwithin{equation}{section}
\definecolor{webgreen}{rgb}{0,.5,0}
\definecolor{webbrown}{rgb}{.6,0,0}
\newcommand{\red}[1]{{\color{red}#1}}
\newcommand{\Z}{{\mathbb Z}}
\newtheorem{thm}{Theorem}
\newtheorem{theorem}[thm]{Theorem}
\newtheorem{lemma}{Lemma}
\newtheorem{corollary}[thm]{Corollary}
\title{A master identity for Horadam numbers}
\author[]{Kunle Adegoke \\\href{mailto:adegoke00@gmail.com}{\tt kunle.adegoke@yandex.com}}
\affil{Department of Physics and Engineering Physics, \mbox{Obafemi Awolowo University}, 220005 Ile-Ife, Nigeria}
\begin{document}
\date{}
%\date{September 8, 2018}
%\currenttime

\maketitle

\begin{abstract}
\noindent We derive an identity involving Horadam numbers. Numerous new identities as well as those found in the existing literature are subsumed in this single identity.

\end{abstract}
%\tableofcontents
% \listoffigures
%\chead{\thepage}
\section{Introduction}
Our aim in writing this paper is to prove the presumably new identity
\[\tag{H}
u_{r - s} w_{n + m}  = u_{m - s} w_{n + r}  - q^{r - s} u_{m - r} w_{n + s}\,,
\]
where $r$, $s$, $n$ and $m$ are arbitrary integers and $(u_j(p,q))_{j\in\Z}$ and $(w_j(a,b;p,q))_{j\in\Z}$ are the Lucas sequence of the first kind  and the Horadam sequence, respectively, with complex parameters $p$ and $q$. Summation identities invoked by identity~(H) will also be derived. Numerous new identities as well as those derived by Horadam \cite{horadam65} and later researchers are subsumed in the single identity~(H).

\medskip

The Horadam sequence $(w_n) = \left(w_n(a,b; p, q)\right)$ is defined by the recurrence relation
\begin{equation}\label{eq.vhrb5b3}
w_0  = a,\,w_1  = b;\,w_n  = pw_{n - 1}  - qw_{n - 2}\, (n \ge 2)\,,
\end{equation}
where  $a$, $b$, $p$ and $q$ are arbitrary complex numbers, with $p\ne 0$ and $q\ne 0$. 

\medskip

Two important cases of  $(w_n)$ are the Lucas sequences of the first kind, $(u_n(p,q))=(w_n(0,1;p,q))$, and of the second kind, $(v_n(p,q))=(w_n(2,p;p,q))$; so that 
\begin{equation}\label{eq.s6z1bcx}
\mbox{$u_0=0$, $u_1=1$};\, u_n=pu_{n-1}-qu_{n-2}, \mbox{($n\ge 2$)};
\end{equation}
and
\begin{equation}\label{eq.zq47g0a}
\mbox{$v_0=2$, $v_1=p$};\, v_n=pv_{n-1}-qv_{n-2}, \mbox{($n\ge 2$)}.
\end{equation}
The most well-known Lucas sequences are the Fibonacci sequence, $(f_n)=(u_n(1,-1))$ and the sequence of Lucas numbers, $(l_n)=(v_n(1,-1)$.
The Fibonacci numbers, $f_n$, and the Lucas numbers, $l_n$ are defined by:
\begin{equation}
f_0  = 0,\;f_1  = 1,\;f_n  = f_{n - 1}  + f_{n - 2}\; (n\ge 2)
\end{equation}
and
\begin{equation}
l_0  = 2,\;l_1  = 1,\;l_n  = l_{n - 1}  + l_{n - 2}\; (n\ge 2)\,.
\end{equation}
Denote by $\alpha$ and $\beta$, the zeros of the characteristic polynomial $x^2-px+q$ for the Horadam sequence and the associated Lucas sequences. Then
\begin{equation}\label{eq.ufpxoag}
\alpha=\frac{p+\sqrt{p^2-4q}}2,\quad \beta=\frac{p-\sqrt{p^2-4q}}2\,,
\end{equation}
\begin{equation}\label{eq.dv3pphj}
\alpha+\beta=p,\quad\alpha-\beta=\sqrt{p^2-4q}\quad\mbox{and }\alpha\beta=q\,.
\end{equation}
The difference equations \eqref{eq.s6z1bcx}, \eqref{eq.zq47g0a} and \eqref{eq.vhrb5b3} are solved by the Binet-like formulas
\begin{equation}\label{eq.djr1ak1}
u_n=\frac{\alpha^n-\beta^n}{\alpha-\beta},\quad v_n=\alpha^n+\beta^n\,,
\end{equation}
and
\begin{equation}\label{eq.ffgsygd}
w_n  = b\left( {\frac{{\alpha ^n  - \beta ^n }}{{\alpha  - \beta }}} \right) - a\alpha \beta \left( {\frac{{\alpha ^{n - 1}  - \beta ^{n - 1} }}{{\alpha  - \beta }}} \right)\,.
\end{equation}
Thus,
\begin{equation}\label{eq.iae7rdq}
w_n=bu_n-aqu_{n-1}\,.
\end{equation}
It also follows that
\begin{equation}\label{eq.mdu5urs}
u_{2n}  = \frac{{\alpha ^{2n}  - \beta ^{2n} }}{{\alpha  - \beta }} = \left( {\frac{{\alpha ^n  - \beta ^n }}{{\alpha  - \beta }}} \right)\left( {\alpha ^n  + \beta ^n } \right) = u_n v_n\,.
\end{equation}
Setting $x=\alpha$ and $y=\beta$ in each of the following algebraic identities:
\begin{equation}\label{eq.jlhu2cf}
\frac{x^m - y^m}{x-y}(x^n  + y^n ) =\frac{x^{n + m}  - y^{n + m}}{x-y}  - x^m y^m\frac{x^{n - m}  - y^{n - m}}{x-y}\,,
\end{equation}
\begin{equation}
(x-y)^2\,\frac{x^m - y^m}{x-y}\,\frac{x^n  - y^n }{x-y} = x^{n + m}  + y^{n + m}  - x^m  y^m(x^{n - m}  + y^{n - m} )\,,
\end{equation}
\begin{equation}\label{eq.ibirhgo}
(x^m + y^m)\frac{x^n  - y^n }{x-y} = \frac{x^{n + m}  - y^{n + m}}{x-y}  + x^m  y^m\frac{x^{n - m}  - y^{n - m} }{x-y}
\end{equation}
and
\begin{equation}\label{eq.eu1f1ya}
(x^m + y^m)(x^n  + y^n ) = x^{n + m}  + y^{n + m}  + x^m  y^m(x^{n - m}  + y^{n - m} )\,,
\end{equation}
we find the following multiplication formulas for Lucas sequences:
\begin{equation}\label{eq.kb3hsvs}
u_mv_n = u_{n + m}  - q^m u_{n - m}\,,
\end{equation}
\begin{equation}\label{eq.nutjauf}
(p^2  - 4q)u_m u_n  = v_{n + m}  - q^m v_{n - m}\,,
\end{equation}
\begin{equation}\label{eq.ciqxfvx}
 v_m u_n = u_{n + m}  + q^m u_{n - m} 
\end{equation}
amd
\begin{equation}\label{eq.u8cxhf1}
v_m v_n = v_{n + m}  + q^m v_{n - m}\,.
\end{equation}
More properties of Lucas sequences can be found in the book by Ribenboim \cite[Chapter 1]{ribenboim}. The Mathworld \cite{mathworld_lucas} and Wikipedia \cite{wiki_lucas} articles are also good sources of information on the subject, with many references to useful materials. The books by Koshy \cite{koshy} and by Vajda \cite{vajda} are excellent reference materials on Fibonacci numbers and Lucas numbers.

% \medskip

% We shall often write $u_n$ for $u_n(p,q)$, $v_n$ for $v_n(p,q)$ and $w_n$ for $w_n(a,b;p,q)$.

\medskip

Extension of the definition of $w_n$ to negative subscripts is provided by writing the recurrence relation as $w_{-n}=(pw_{-n+1}-w_{-n+2})/q$. Using the Binet-like formulas \eqref{eq.djr1ak1} and \eqref{eq.ffgsygd}, it is readily established that
\begin{equation}\label{eq.t2c6ndd}
u_{-n}=-u_n/q^n\,,\quad v_{-n}=v_n/q^n\,.
\end{equation}
From \eqref{eq.iae7rdq} and \eqref{eq.t2c6ndd}, it follows that
\begin{equation}
w_{ - n}  = \frac{{w_{ - n} }}{{w_n }}w_n  = \left( {\frac{{(ap - b)u_n  - aqu_{n - 1} }}{{bu_n  - aqu_{n - 1} }}} \right)q^{ - n} w_n\,.
\end{equation}
We require the following very general summation identities.
\begin{lemma}[{\cite[Lemma 1]{adegoke18}}]\label{lem.u4bqbkc}
Let $(X_n)$ and $(Y_n)$ be any two sequences such that $X_n$ and $Y_n$, $n\in\Z$, are connected by a three-term recurrence relation $hX_n=f_1X_{n-c}+f_2Y_{n-d}$, where $h$, $f_1$ and $f_2$ are arbitrary non-vanishing complex functions, not dependent on $n$, and $c$ and $d$ are integers. Then, the following identity holds for integer $k$:
\[
f_2 \sum_{j = 0}^k {f_1^{k - j} h^j Y_{n - kc  - d  + c j} }  = h^{k + 1} X_n  - f_1^{k + 1} X_{n - (k + 1)c }\,.
\]

\end{lemma}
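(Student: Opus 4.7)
The plan is to prove the identity by induction on $k \ge 0$. Let me denote the left-hand side by $S_k(n) := f_2 \sum_{j=0}^k f_1^{k-j} h^j Y_{n-kc-d+cj}$, so the goal is to show $S_k(n) = h^{k+1} X_n - f_1^{k+1} X_{n-(k+1)c}$.

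The base case $k=0$ is immediate: the sum collapses to the single term $f_2 Y_{n-d}$, and the claim reduces to $f_2 Y_{n-d} = hX_n - f_1 X_{n-c}$, which is exactly the given three-term recurrence.

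For the inductive step, assume the identity holds for some $k \ge 0$. Split off the $j=k+1$ term from $S_{k+1}(n)$ and factor $f_1$ out of the remaining $j=0,\dots,k$ portion; after a routine index check one sees the leftover sum is $f_1\cdot S_k(n-c)$, so that
\[
S_{k+1}(n) = f_1 S_k(n-c) + f_2 h^{k+1} Y_{n-d}.
\]
Applying the inductive hypothesis to $S_k(n-c)$ yields $f_1\bigl(h^{k+1} X_{n-c} - f_1^{k+1} X_{n-(k+2)c}\bigr) + f_2 h^{k+1} Y_{n-d}$. Factoring $h^{k+1}$ from two of the three terms gives $h^{k+1}\bigl(f_1 X_{n-c} + f_2 Y_{n-d}\bigr) - f_1^{k+2} X_{n-(k+2)c}$, and invoking the recurrence $f_1 X_{n-c} + f_2 Y_{n-d} = hX_n$ one last time produces $h^{k+2} X_n - f_1^{k+2} X_{n-(k+2)c}$, as required.

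The only real obstacle is bookkeeping: one must verify that the residual sum after peeling off the top term matches $f_1 S_k(n-c)$ exactly, i.e.\ that the indices $n-(k+1)c-d+cj$ for $j=0,\dots,k$ agree with the definition of $S_k$ evaluated at $n-c$. This is a direct substitution, so the argument is essentially mechanical once the induction is set up. No further hypotheses on $(X_n)$, $(Y_n)$, or the constants $h, f_1, f_2, c, d$ are needed beyond what the statement provides.
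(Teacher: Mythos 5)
Your induction is correct, and every step checks out: the base case is exactly the recurrence, the index shift $n-(k+1)c-d+cj = (n-c)-kc-d+cj$ does confirm that the residual sum is $f_1 S_k(n-c)$, and the step $S_{k+1}(n)=f_1S_k(n-c)+f_2h^{k+1}Y_{n-d}$ closes cleanly. Note, however, that the paper itself offers no proof to compare against: the lemma is quoted verbatim from \cite{adegoke18}, where it is established by telescoping rather than induction. There one multiplies the recurrence $hX_{n-(k-j)c}=f_1X_{n-(k-j+1)c}+f_2Y_{n-kc-d+cj}$ by $h^jf_1^{k-j}$ and sums over $j=0,\dots,k$; setting $T_j=h^jf_1^{k+1-j}X_{n-(k+1-j)c}$, the left and right sides pair up as $T_{j+1}-T_j$, and the sum collapses to $T_{k+1}-T_0=h^{k+1}X_n-f_1^{k+1}X_{n-(k+1)c}$. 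Your inductive step is precisely the act of absorbing one more telescoping term, so the two arguments are the same computation organized differently; the telescoping version is shorter, while yours makes the bookkeeping fully explicit. One small mismatch with the statement as quoted: the lemma claims the identity \emph{for integer} $k$, whereas induction from $k=0$ yields only $k\ge 0$, plus $k=-1$ trivially (empty sum, both sides zero). For $k\le -2$ the sum requires the convention $\sum_{j=0}^{k}=-\sum_{j=k+1}^{-1}$, under which your step recurrence still holds and, since $h$ and $f_1$ are non-vanishing, can be run downward from $k=-1$; a sentence to this effect (or an explicit restriction to $k\ge 0$, which is all the applications in this paper genuinely need) would make your proof match the stated generality.
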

\begin{lemma}[{\cite[Lemma 2]{adegoke18}}]\label{lem.s9jfs7n}
Let $(X_n)$ be any arbitrary sequence, where $X_n$, $n\in\Z$, satisfies a three-term recurrence relation $hX_n=f_1X_{n-c}+f_2X_{n-d}$, where $h$, $f_1$ and $f_2$ are arbitrary non-vanishing complex functions, not dependent on $n$, and $c$ and $d$ are integers. Then, the following identities hold for integer $k$:
\begin{equation}\label{eq.mxyb9zk}
f_2 \sum_{j = 0}^k {f_1^{k - j} h^j X_{n - kc  - d  + c j} }  = h^{k + 1} X_n  - f_1^{k + 1} X_{n - (k + 1)c }\,,
\end{equation}
\begin{equation}\label{eq.cgldajj}
f_1 \sum_{j = 0}^k {f_2^{k - j} h^j X_{n - kd  - c  + d j} }  = h^{k + 1} X_n  - f_2^{k + 1} X_{n - (k + 1)d }
\end{equation}
and
\begin{equation}\label{eq.n2n4ec3}
h\sum_{j = 0}^k {( - 1)^j f_2^{k - j} f_1 ^j X_{n - (d  - c )k + c  + (d  - c )j} }  = ( - 1)^k f_1 ^{k + 1} X_n  + f_2^{k + 1} X_{n - (d  - c )(k + 1)}\,.
\end{equation}
\end{lemma}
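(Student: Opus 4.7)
The plan is to reduce all three identities to the already-stated Lemma~\ref{lem.u4bqbkc}, so that no independent induction is required. Identity~\eqref{eq.mxyb9zk} is simply that lemma specialized to $Y_n=X_n$: the hypothesis $hX_n=f_1X_{n-c}+f_2Y_{n-d}$ then coincides with the given recurrence, and the conclusion reads exactly as \eqref{eq.mxyb9zk}.

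For \eqref{eq.cgldajj} I would exploit the obvious symmetry of the hypothesis. Since $f_1X_{n-c}+f_2X_{n-d}=f_2X_{n-d}+f_1X_{n-c}$, Lemma~\ref{lem.u4bqbkc} may be applied with the simultaneous relabelling $(f_1,c)\leftrightarrow(f_2,d)$ and $Y_n=X_n$, producing \eqref{eq.cgldajj} at once.

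Identity~\eqref{eq.n2n4ec3} is the subtle one, on account of the alternating sign $(-1)^j$. My plan is to isolate $f_2X_n$ in a shifted copy of the recurrence so that a minus sign enters the coefficients. Concretely, replacing $n$ by $n+d$ in the hypothesis and solving for $f_2X_n$ gives
\[
f_2X_n \;=\; h\,X_{n-(-d)} + (-f_1)\,X_{n-(c-d)},
\]
which is itself a three-term recurrence $HX_n=F_1X_{n-C}+F_2X_{n-D}$ with $H=f_2$, $F_1=h$, $F_2=-f_1$, $C=-d$, $D=c-d$. Applying the just-established \eqref{eq.cgldajj} to this new recurrence yields a sum in which the factor $F_2^{k-j}=(-1)^{k-j}f_1^{k-j}$ supplies the alternating sign; reversing the summation index via $i=k-j$ converts $(-1)^{k-j}$ into $(-1)^i$, and a final shift $n\mapsto n-(k+1)(d-c)$ brings the subscripts into the exact form stated in \eqref{eq.n2n4ec3}. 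The main obstacle throughout is purely clerical: after these three successive transformations, one must carefully verify that every exponent and every subscript lines up with the target expression, in particular that the shift $d-D=c$ reproduces the ``$+c$'' appearing in the subscript of \eqref{eq.n2n4ec3}.
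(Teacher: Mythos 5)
Your reduction is correct, but it is a genuinely different route from the paper's, because the paper supplies no proof at all for this lemma: it is quoted verbatim from \cite[Lemma 2]{adegoke18} and simply cited. Within the present paper your bootstrap from Lemma \ref{lem.u4bqbkc} is the natural self-contained argument: \eqref{eq.mxyb9zk} is indeed that lemma with $Y_n=X_n$, \eqref{eq.cgldajj} follows from the relabelling $(f_1,c)\leftrightarrow(f_2,d)$, and your treatment of \eqref{eq.n2n4ec3} checks out in detail. With your identifications $H=f_2$, $F_1=h$, $F_2=-f_1$, $C=-d$, $D=c-d$, identity \eqref{eq.cgldajj} gives
\[
h\sum_{j=0}^k(-f_1)^{k-j}f_2^{\,j}\,X_{n+(d-c)k+d-(d-c)j}=f_2^{k+1}X_n+(-1)^kf_1^{k+1}X_{n+(d-c)(k+1)}\,,
\]
and the reversal $i=k-j$ followed by the shift $n\mapsto n-(d-c)(k+1)$ lands exactly on \eqref{eq.n2n4ec3}; I verified every subscript and exponent. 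One clerical slip in your commentary: with your $D=c-d$, the bookkeeping identity that produces the ``$+c$'' is $d+D=c$ (equivalently $d-(d-c)=c$), not $d-D=c$; the shift you prescribe is nevertheless the correct one. A marginally shorter variant of your own idea avoids both the index reversal and the final shift: solve the recurrence for $f_1X_{n-c}$ and replace $n$ by $n+c$ to get $f_1X_n=hX_{n-(-c)}+(-f_2)X_{n-(d-c)}$, apply \eqref{eq.mxyb9zk} with $H=f_1$, $F_1=-f_2$, $F_2=h$, $C=d-c$, $D=-c$, and multiply the resulting identity by $(-1)^k$; this yields \eqref{eq.n2n4ec3} immediately. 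What your approach buys, compared with the paper's bare citation, is that Lemma \ref{lem.s9jfs7n} becomes a corollary of the single two-sequence identity of Lemma \ref{lem.u4bqbkc}, so no independent induction or telescoping argument is needed.
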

\begin{lemma}[{\cite[Lemma 3]{adegoke18}}]\label{lem.binomial}
Let $(X_n)$ be any arbitrary sequence. Let $X_n$, $n\in\Z$, satisfy a three-term recurrence relation $hX_n=f_1X_{n-c}+f_2X_{n-d}$, where $h$, $f_1$ and $f_2$ are non-vanishing complex functions, not dependent on $n$, and $c$ and $d$ are integers. Then,
\begin{equation}\label{eq.fe496kc}
\sum_{j = 0}^k {\binom kjf_2^{k - j} f_1^j X_{n - d k + (d  - c )j} }  = h^k X_n\,,
\end{equation}
\begin{equation}\label{eq.j7k6a8g}
\sum_{j = 0}^k {( - 1)^j \binom kjf_2^{k - j} h^j X_{n + (c  - d )k + d j} }  = ( - 1)^k f_1^k X_n
\end{equation}
and
\begin{equation}\label{eq.fnwrzi3}
\sum_{j = 0}^k {( - 1)^j \binom kjf_1^{k - j} h^j X_{n + (d  - c )k + c j} }  = ( - 1)^k f_2^k X_n\,,
\end{equation}
for $k$ a non-negative integer.

\end{lemma}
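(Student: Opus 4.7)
The plan is to prove \eqref{eq.fe496kc} by induction on $k$ and then derive \eqref{eq.j7k6a8g} and \eqref{eq.fnwrzi3} by rewriting the recurrence in two equivalent forms and invoking \eqref{eq.fe496kc} again. Conceptually, the cleanest way to see why all three identities hold is to introduce the shift operator $E$ defined by $(EX)_n=X_{n+1}$ and observe that the recurrence says, at the operator level, $h=f_1E^{-c}+f_2E^{-d}$. Because $E^{-c}$ and $E^{-d}$ commute and $h,f_1,f_2$ are independent of $n$, the ordinary binomial theorem gives $h^k=\sum_{j=0}^k\binom{k}{j}(f_2E^{-d})^{k-j}(f_1E^{-c})^j$, and applying both sides to $X_n$ yields \eqref{eq.fe496kc} immediately, with the subscript $n-d(k-j)-cj=n-dk+(d-c)j$.

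For a self-contained inductive proof of \eqref{eq.fe496kc}, the base $k=0$ is trivial and $k=1$ is just the defining recurrence. For the inductive step I would write $h^{k+1}X_n=h\cdot h^kX_n$, substitute the inductive hypothesis, and apply $hX_m=f_1X_{m-c}+f_2X_{m-d}$ termwise. The two resulting sums re-index into a single sum over $0\le j\le k+1$, with the interior coefficients combining via Pascal's identity $\binom{k}{j-1}+\binom{k}{j}=\binom{k+1}{j}$ and the boundary terms $j=0,\,j=k+1$ matching $\binom{k+1}{0}$ and $\binom{k+1}{k+1}$.

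To obtain \eqref{eq.j7k6a8g} I would solve the recurrence for the $f_1$ term, getting $f_1X_n=hX_{n+c}-f_2X_{n+c-d}$, and shift $n\mapsto n-c$ to write this in the template form $h'X_n=f_1'X_{n-c'}+f_2'X_{n-d'}$ with $(h',f_1',f_2',c',d')=(f_1,h,-f_2,-c,d-c)$. Applying \eqref{eq.fe496kc} to this new recurrence, the factor $(-f_2)^{k-j}$ supplies the sign $(-1)^{k-j}$, and combining with an overall $(-1)^k$ after multiplying both sides of the resulting equation by $(-1)^k$ produces the stated sign pattern $(-1)^j$; the shift of the lower subscript $n-d'k+(d'-c')j$ equals $n+(c-d)k+dj$ as required. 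Identity \eqref{eq.fnwrzi3} is obtained symmetrically from $f_2X_n=hX_{n+d}-f_1X_{n+d-c}$.

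The main obstacle is purely the index bookkeeping—keeping track of how the shifts $c'=-c$, $d'=d-c$ (or $c'=d-c$, $d'=-d$) interact with the summation variable and the required final form. No substantive combinatorial or algebraic difficulty arises beyond Pascal's identity and the binomial expansion of commuting operators.
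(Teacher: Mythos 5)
Your proof is correct and complete. A point of comparison is actually moot here: the paper never proves Lemma~\ref{lem.binomial} at all --- it imports the statement by citation from \cite[Lemma 3]{adegoke18} --- so yours is the only argument on the table, and it is sound. The operator identity $h = f_1E^{-c} + f_2E^{-d}$ with commuting shifts, expanded by the binomial theorem, yields \eqref{eq.fe496kc} immediately (your inductive backup with Pascal's identity is the same computation written out termwise, and the re-indexing in the inductive step checks out: the $f_1$-branch shifts $n-dk+(d-c)j-c$ to $n-d(k+1)+(d-c)(j+1)$ as needed). The reduction of \eqref{eq.j7k6a8g} and \eqref{eq.fnwrzi3} to \eqref{eq.fe496kc} by solving the recurrence for the $f_1$ (resp.\ $f_2$) term is exactly right: with $(h',f_1',f_2',c',d')=(f_1,h,-f_2,-c,d-c)$ one gets the subscript $n-d'k+(d'-c')j = n+(c-d)k+dj$, and multiplying through by $(-1)^k$ converts the factor $(-1)^{k-j}$ into $(-1)^j$, matching the stated form; the hypotheses transfer since $h$, $-f_2$ are non-vanishing whenever $h$, $f_2$ are (indeed no step of the argument divides by anything, so the non-vanishing assumption is not even used). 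One slip in your closing aside: for \eqref{eq.fnwrzi3}, the shifts coming from $f_2X_n = hX_{n+d} - f_1X_{n+d-c}$ are $(c',d')=(-d,\,c-d)$ (or $(c-d,\,-d)$ under the opposite matching of terms), not $(d-c,\,-d)$; this typo sits outside the substantive derivation, which you state correctly.
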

\begin{lemma}\label{lem.reciprocal1}
Let $(X_n)$ and $(Y_n)$ be any two sequences such that $X_n$ and $Y_n$, $n\in\Z$, are connected by a three-term recurrence relation $hX_n=f_1X_{n-c}+f_2Y_{n-d}$, where $f_1$ and $f_2$ are arbitrary non-vanishing complex functions, not dependent on $n$, and $c$, $d$ and $k$ are integers. Then,
\begin{equation}
X_nX_{n - c (k + 1)} f_2 \sum_{j = 0}^k h^{k-j}f_1^j{\frac{{Y_{n - d - c k + c j} }}{{X_{n - c k + c j} X_{n - c - c k + c j} }}}  = h^{k+1}X_n - f_1^{k + 1}X_{n - c (k + 1)}\,.
\end{equation}

\end{lemma}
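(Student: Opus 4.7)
The plan is to recast the given three-term recurrence into a telescoping form and then sum. Dividing $hX_n = f_1 X_{n-c} + f_2 Y_{n-d}$ through by $X_n X_{n-c}$ gives
\[
\frac{f_2 Y_{n-d}}{X_n X_{n-c}} \;=\; \frac{h}{X_{n-c}} \;-\; \frac{f_1}{X_n},
\]
and replacing $n$ by $n - ck + cj$ (which is permitted since the recurrence holds for every integer shift of $n$) yields, for $0 \le j \le k$,
\[
\frac{f_2 Y_{n-d-ck+cj}}{X_{n-ck+cj}\,X_{n-c-ck+cj}} \;=\; \frac{h}{X_{n-c(k+1)+cj}} \;-\; \frac{f_1}{X_{n-ck+cj}}.
\]

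The next step is to multiply this identity by $h^{k-j} f_1^{j}$ and sum over $j$ from $0$ to $k$. The left-hand side reproduces the sum appearing in the statement of the lemma (up to the overall factor $X_n X_{n-c(k+1)}$), while the right-hand side splits as $A_j - B_j$ with
\[
A_j \;=\; \frac{h^{k+1-j} f_1^{j}}{X_{n-c(k+1)+cj}},\qquad B_j \;=\; \frac{h^{k-j} f_1^{j+1}}{X_{n-ck+cj}}.
\]
A direct index check shows $B_j = A_{j+1}$, so the sum telescopes to $A_0 - A_{k+1}$, namely
\[
\frac{h^{k+1}}{X_{n-c(k+1)}} \;-\; \frac{f_1^{k+1}}{X_n}.
\]

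Finally, multiplying through by $X_n X_{n-c(k+1)}$ clears the denominators and produces exactly $h^{k+1} X_n - f_1^{k+1} X_{n-c(k+1)}$, as required. The only non-routine step is verifying $B_j = A_{j+1}$, which is a matter of checking that the exponents of $h$ and $f_1$ and the subscript of $X$ all match under the shift $j \mapsto j+1$; this is the one place where a slip in bookkeeping would be easy. Conceptually there is no obstacle: the lemma is simply the reciprocal/partial-fraction analogue of Lemma~\ref{lem.u4bqbkc}, with the same right-hand side arising from the same telescoping mechanism.
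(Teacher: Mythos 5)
Your proof is correct: dividing the recurrence by $X_nX_{n-c}$, shifting $n\mapsto n-ck+cj$, weighting by $h^{k-j}f_1^{j}$, and telescoping via $B_j=A_{j+1}$ (which checks out, since $-c(k+1)+c(j+1)=-ck+cj$) yields $h^{k+1}/X_{n-c(k+1)}-f_1^{k+1}/X_n$, and clearing denominators gives the stated identity; the only tacit assumption is $X_{n-ci}\neq 0$ for $0\le i\le k+1$, the nonsingularity condition the paper spells out only in Lemma \ref{lem.reciprocal}. The paper states Lemma \ref{lem.reciprocal1} without proof, but your telescoping argument is exactly the mechanism underlying the companion lemmas it quotes from \cite{adegoke18}, so you have reconstructed the intended proof rather than taken a different route.
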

% \begin{lemma}\label{lem.qrq1npe}
% Let $(X_m)$ be any arbitrary sequence, where $X_m$, $n\in\Z$, satisfies a three-term recurrence relation $hX_m=f_1X_{m-c}+f_2X_{m-d}$, where $h$, $f_1$ and $f_2$ are arbitrary non-vanishing complex functions, not dependent on $m$, and $c$ and $d$ are integers. Then, the following identities hold for integer $k$:
% \begin{equation}\label{eq.fstb4rq}
% X_mX_{m - c (k + 1)} f_2 \sum_{j = 0}^k h^{k-j}f_1^j{\frac{{X_{m - d - c k + c j} }}{{X_{m - c k + c j} X_{m - c - c k + c j} }}}  = h^{k+1}X_m - f_1^{k + 1}X_{m - c (k + 1)}\,,
% \end{equation}
% and
% \begin{equation}\label{eq.apls6ys}
% X_mX_{m - d (k + 1)} f_1 \sum_{j = 0}^k h^{k-j}f_2^j{\frac{{X_{m - c - d k + d j} }}{{X_{m - d k + d j} X_{m - d - d k + d j} }}}  = h^{k+1}X_m - f_2^{k + 1}X_{m - d (k + 1)}\,.
% \end{equation}
% \end{lemma}
\begin{lemma}\label{lem.reciprocal}
Let $(X_n)$ be any arbitrary sequence. Let $X_n$, $n\in\Z$, satisfy a three-term recurrence relation $hX_n=f_1X_{n-c}+f_2X_{n-d}$, where $f_1$ and $f_2$ are non-vanishing complex functions, not dependent on $n$, and $c$, $d$ and $k$ are integers. Then, the following identities hold for arbitrary integers $n$, $c$, $d$ and $k$ for which the summand is not singular in the summation interval:
\begin{equation}\label{eq.q1q4vob}
X_nX_{n - c (k + 1)} f_2 \sum_{j = 0}^k h^{k-j}f_1^j{\frac{{X_{n - d - c k + c j} }}{{X_{n - c k + c j} X_{n - c - c k + c j} }}}  = h^{k+1}X_n - f_1^{k + 1}X_{n - c (k + 1)}\,,
\end{equation}
\begin{equation}\label{eq.mg9cdve}
X_nX_{n - d (k + 1)} f_1 \sum_{j = 0}^k h^{k-j}f_2^j{\frac{{X_{n - c - d k + d j} }}{{X_{n - d k + d j} X_{n - d - d k + d j} }}}  = h^{k+1}X_n - f_2^{k + 1}X_{n - d (k + 1)}\,,
\end{equation}
and
\begin{equation}\label{eq.l8b84rc}
\begin{split}
X_nX_{n - (d - c)(k + 1)}h&\sum_{j = 0}^k {(-1)^jf_1^{k-j}f_2^j\frac{{X_{n + c - (d - c)k + (d - c)j} }}{{X_{n - (d - c)k + (d - c)j} X_{n - d + c - (d - c)k + (d - c)j} }}}\\
&\qquad= f_1^{k+1}X_n + (-1)^kf_2^{k+1}X_{n - (d - c)(k + 1)}\,.
\end{split}
\end{equation}

\end{lemma}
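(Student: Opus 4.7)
The right-hand sides of \eqref{eq.q1q4vob}, \eqref{eq.mg9cdve}, and \eqref{eq.l8b84rc} coincide exactly with those of \eqref{eq.mxyb9zk}, \eqref{eq.cgldajj}, and \eqref{eq.n2n4ec3} in Lemma~\ref{lem.s9jfs7n}. My plan is to prove each of the three identities by exhibiting the rational summand, after multiplication by the prefactor coming from outside the fraction, as a difference of two consecutive terms in a one-parameter sequence, so that the sum collapses telescopically. No auxiliary identity beyond the three-term recurrence $hX_n = f_1X_{n-c}+f_2X_{n-d}$ itself will be needed.

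For \eqref{eq.q1q4vob}, I would set $m = n-ck+cj$ and use the recurrence in the rearranged form $f_2X_{m-d} = hX_m - f_1X_{m-c}$. Dividing by $X_mX_{m-c}$ gives
\[
\frac{f_2 X_{m-d}}{X_m X_{m-c}} = \frac{h}{X_{m-c}} - \frac{f_1}{X_m},
\]
so the $j$th summand equals $A_j - A_{j+1}$, where $A_j = h^{k-j+1}f_1^{j}/X_{n-c(k+1)+cj}$. Summing over $j=0,\ldots,k$ then leaves only $A_0 - A_{k+1} = h^{k+1}/X_{n-c(k+1)} - f_1^{k+1}/X_n$, and multiplying through by $X_n X_{n-c(k+1)}$ produces the right-hand side of \eqref{eq.q1q4vob}. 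Identity~\eqref{eq.mg9cdve} then follows by the obvious symmetry $(c,f_1)\leftrightarrow(d,f_2)$ in the hypothesis.

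For \eqref{eq.l8b84rc}, I would apply the recurrence in the shifted form $hX_{m+c}=f_1X_m+f_2X_{m+c-d}$ with $m = n-(d-c)k+(d-c)j$. Dividing by $X_mX_{m+c-d}$ gives
\[
\frac{hX_{m+c}}{X_m X_{m+c-d}} = \frac{f_1}{X_{m+c-d}} + \frac{f_2}{X_m}.
\]
Because the sign here is $+$ rather than $-$, the alternating factor $(-1)^j$ built into \eqref{eq.l8b84rc} is exactly what is needed to make the two pieces telescope: once it is included, each summand can be written as $B_{j}-B_{j+1}$ for a suitable $B_j$, and summing then collapses to the boundary contributions $f_1^{k+1}/X_{n-(d-c)(k+1)} + (-1)^k f_2^{k+1}/X_n$. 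Multiplying through by $X_n X_{n-(d-c)(k+1)}$ then yields the stated right-hand side.

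The argument is purely mechanical; there is no deep obstacle. The one place where care is required is in \eqref{eq.l8b84rc}, where the alternation $(-1)^j$ must be propagated correctly through the reindexing $j\mapsto j-1$ so that the surviving boundary terms carry the correct signs $f_1^{k+1}$ and $(-1)^k f_2^{k+1}$; a single sign slip here would produce a spurious minus on the wrong term. The sole implicit hypothesis is that none of the $X_\bullet$ appearing in the denominators vanishes, which is the ``summand not singular'' caveat already stated in the lemma.
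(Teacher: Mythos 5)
Your proof is correct, and it actually fills a gap rather than duplicating anything: the paper states Lemma~\ref{lem.reciprocal} (like Lemma~\ref{lem.reciprocal1}) with no proof at all---only Lemmas~\ref{lem.u4bqbkc}--\ref{lem.binomial} are quoted from \cite{adegoke18}. I checked your telescoping in detail: with $m=n-ck+cj$ the $j$th summand of \eqref{eq.q1q4vob} is indeed $A_j-A_{j+1}$ with $A_j=h^{k-j+1}f_1^{j}/X_{n-c(k+1)+cj}$, the boundary terms are as you state, and multiplying by $X_nX_{n-c(k+1)}$ gives the right-hand side; for \eqref{eq.l8b84rc} your sketch is made precise by $B_j=(-1)^jf_1^{k-j+1}f_2^{j}/X_{n-(d-c)(k+1)+(d-c)j}$, which yields $B_0-B_{k+1}=f_1^{k+1}/X_{n-(d-c)(k+1)}+(-1)^kf_2^{k+1}/X_n$, matching your claimed boundary contributions; and the symmetry $(c,f_1)\leftrightarrow(d,f_2)$ giving \eqref{eq.mg9cdve} is legitimate because the recurrence hypothesis is invariant under that swap. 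For comparison with the paper's evident intent: \eqref{eq.q1q4vob} is the specialization $Y_n=X_n$ of Lemma~\ref{lem.reciprocal1}, which in turn is Lemma~\ref{lem.u4bqbkc} applied to the reciprocal sequence, since dividing the recurrence by $X_nX_{n-c}$ gives $f_1/X_n = h/X_{n-c} - f_2X_{n-d}/(X_nX_{n-c})$, i.e.\ the two-sequence form $h'X'_n=f_1'X'_{n-c}+f_2'Y'_{n-d}$ with $X'_n=1/X_n$, $h'=f_1$, $f_1'=h$, $f_2'=-f_2$; since Lemma~\ref{lem.u4bqbkc} is itself proved by telescoping in \cite{adegoke18}, the lemma-based route and your direct argument are the same computation, yours merely being self-contained. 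One small tidy observation you could add: the factors $1/X_n$ and $1/X_{n-c(k+1)}$ appearing in $A_{k+1}$ and $A_0$ are already denominators of the summand (at $j=k$ and $j=0$ respectively), so the ``summand not singular'' hypothesis of the lemma covers every division your argument performs.
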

\section{The master identity and some consequences}
%\subsection{Multiplication formulas}
\begin{theorem}
The following identity holds for arbitrary integers $r$, $s$, $n$ and $m$:
\[
u_{r - s} w_{n + m}  = u_{m - s} w_{n + r}  - q^{r - s} u_{m - r} w_{n + s}\,.
\]
\end{theorem}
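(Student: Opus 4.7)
The plan is to reduce the identity to a statement about pure exponentials via the Binet formulas and then verify it by direct algebraic expansion. Concretely, from \eqref{eq.ffgsygd} every Horadam sequence admits the representation $w_n = A\alpha^n + B\beta^n$ with constants $A,B$ depending only on $a,b,p,q$. Since both sides of the master identity are $\Z$-linear in the sequence $(w_n)$, it is enough to prove it in the two special cases $w_n=\alpha^n$ and $w_n=\beta^n$; by the symmetry $\alpha\leftrightarrow\beta$ (which fixes $q=\alpha\beta$ and $u_k$, the latter up to an overall sign that cancels between the two sides), the $\beta$ case follows from the $\alpha$ case.

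So I would focus on showing
\[
u_{r-s}\,\alpha^{n+m} \;=\; u_{m-s}\,\alpha^{n+r} \;-\; q^{r-s}\,u_{m-r}\,\alpha^{n+s}.
\]
First I would factor $\alpha^{n+r}$ out of both sides; using $q^{r-s}=\alpha^{r-s}\beta^{r-s}$ the right-hand side collapses to $\alpha^{n+r}\bigl(u_{m-s}-\beta^{r-s}u_{m-r}\bigr)$, and the left-hand side to $\alpha^{n+r}\cdot u_{r-s}\alpha^{m-r}$. Thus it remains to check the cleaner identity
\[
u_{r-s}\,\alpha^{m-r} \;=\; u_{m-s} - \beta^{r-s}u_{m-r},
\]
which follows immediately from the Binet formula \eqref{eq.djr1ak1}: multiplying both sides by $\alpha-\beta$ reduces both to $\alpha^{m-s}-\alpha^{m-r}\beta^{r-s}$.

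As a sanity check I would also run the alternative, fully symmetric route: plug the Binet formula directly into the full identity, clear $(\alpha-\beta)$'s, collect the four bilinear monomials on the right-hand side, and observe that the two cross-terms $\alpha^{m-s}\beta^{n+r}$ and $\alpha^{n+r}\beta^{m-s}$ coming from $u_{m-s}w_{n+r}$ are exactly cancelled by the analogous cross-terms from $q^{r-s}u_{m-r}w_{n+s}$ once $q^{r-s}=\alpha^{r-s}\beta^{r-s}$ is distributed. What remains matches $(\alpha^{r-s}-\beta^{r-s})(A\alpha^{n+m}+B\beta^{n+m})=(\alpha-\beta)\,u_{r-s}w_{n+m}$.

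There is no real conceptual obstacle; the only mild nuisance is bookkeeping of the exponents, which is why I prefer the reduction to the one-exponential case above — there the cancellation is a one-line Binet computation, and extension to all integer indices (including negative ones) is automatic because \eqref{eq.t2c6ndd} shows the Binet formula is valid for every $n\in\Z$, so no separate case analysis for the signs of $r-s$, $m-s$, $m-r$ is needed.
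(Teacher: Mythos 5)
Your argument is correct, but it takes a genuinely different route from the paper's. The paper proves (H) by a basis/interpolation argument in the index $m$: since $(w_{n+m})_m$, $(u_{m-s})_m$ and $(u_{m-r})_m$ all satisfy the same recurrence, it posits $w_{n+m}=\lambda_1 u_{m-s}+\lambda_2 u_{m-r}$ with $\lambda_1,\lambda_2$ independent of $m$ (see \eqref{eq.nbl33qg}), evaluates at $m=r$ and $m=s$ where $u_0=0$ isolates one coefficient each, giving $\lambda_1 u_{r-s}=w_{n+r}$ and $\lambda_2 u_{s-r}=w_{n+s}$, and then eliminates the $\lambda$'s and converts $u_{s-r}$ into $-q^{s-r}u_{r-s}$ via \eqref{eq.t2c6ndd}, disposing of $r=s$ as the identically satisfied case. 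You instead use linearity of both sides in $(w_n)$ together with the representation $w_n=A\alpha^n+B\beta^n$ from \eqref{eq.ffgsygd} to reduce to the single exponential $w_n=\alpha^n$, where factoring out $\alpha^{n+r}$ leaves the one-line Binet check $u_{r-s}\,\alpha^{m-r}=u_{m-s}-\beta^{r-s}u_{m-r}$; I verified this and it is right. What your route buys: the verification is fully explicit and needs no case split, whereas the paper's ansatz quietly assumes that $(u_{m-s})_m$ and $(u_{m-r})_m$ span the two-dimensional solution space of the recurrence --- which is exactly the condition $u_{r-s}\neq 0$, hence the paper's separate remark about $r=s$. What the two proofs share: both really operate in the non-degenerate regime $\alpha\neq\beta$ (your constants $A,B$ exist only then, and the paper's ingredient \eqref{eq.t2c6ndd} is itself derived from the Binet forms \eqref{eq.djr1ak1}); in either treatment the degenerate case $p^2=4q$ is recovered by observing that, after clearing powers of $q$, (H) is a polynomial identity in $p$, $q$, $a$, $b$ for each fixed choice of integer indices. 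One micro-correction to your parenthetical: under the swap $\alpha\leftrightarrow\beta$ the quantity $u_k=(\alpha^k-\beta^k)/(\alpha-\beta)$ is exactly invariant, since numerator and denominator both change sign, so no sign needs to cancel \emph{between} the two sides of the identity; your conclusion stands either way.
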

\begin{proof}
Since both sequences $(u_n)$ and $(w_n)$ have the same recurrence relation, we choose a basis set in $(u_n)$ and express the numbers from $(w_n)$ in this basis. Let
\begin{equation}\label{eq.nbl33qg}
w_{n+m}=\lambda_1u_{m-s}+\lambda_2u_{m-r}\,,
\end{equation}
where $r$, $s$, $n$ and $m$ are arbitrary integers and the coefficients $\lambda_1$ and $\lambda_2$ are to be determined. Setting $m=r$ and $m=s$, in turn, gives
\begin{equation}\label{eq.jcs53o2}
w_{n + r}  = \lambda_1 u_{r - s},\quad w_{n + s}  = \lambda_2 u_{s - r}\,.
\end{equation}
Multiplying through identity \eqref{eq.nbl33qg} by $u_{r-s}u_{s-r}$ gives
\begin{equation}
u_{r - s} u_{s - r} w_{n + m}  = \underbrace{\lambda _1 u_{r - s}}_{w_{n + r}} u_{s - r} u_{m - s}  + \underbrace{\lambda _2 u_{s - r}}_{w_{n + s}} u_{r - s} u_{m - r}\,.
\end{equation}
Thus, we find
\[
\begin{split}
u_{r - s} u_{s - r} w_{n + m}  &= u_{s - r} u_{m - s} w_{n + r}  + u_{r - s} u_{m - r} w_{n + s}\\
&= u_{s - r} u_{m - s} w_{n + r}  - q^{r - s} u_{s - r} u_{m - r} w_{n + s}\,;
\end{split}
\]
so that identity~(H) is satisfied identically if $r=s$ and numerically if $r\ne s$.
\end{proof}
Since $r$, $s$, $m$ and $n$ are arbitrary, identity~(H) also implies the following identities:
\[\tag{F}
u_{r - s} w_{n + m}  = u_{n - s} w_{m + r}  - q^{r - s} u_{n - r} w_{m + s}\,,
\]
\[\tag{G}
u_{r - s} w_{n + m}  = u_{n + r} w_{m - s}  - q^{r - s} u_{n + s} w_{m - r}
\]
and
\[\tag{J}
u_{{r-s}}w_{{n+m}}=u_{{m+r}}w_{{n-s}}-{q}^{r-s}u_{{m+s}}w_{{n-r}}\,.
\]
In the particular cases of Lucas sequences, identities (H), (F), (G) and (J) read:
\begin{equation}
u_{r - s} u_{n + m}  = u_{m - s} u_{n + r}  - q^{r - s} u_{m - r} u_{n + s}\,,
\end{equation}
\begin{equation}
u_{r - s} u_{n + m}  = u_{n - s} u_{m + r}  - q^{r - s} u_{n - r} u_{m + s}\,,
\end{equation}
\begin{equation}
u_{r - s} u_{n + m}  = u_{n + r} u_{m - s}  - q^{r - s} u_{n + s} u_{m - r}\,,
\end{equation}
\begin{equation}
u_{{r-s}}u_{{n+m}}=u_{{m+r}}u_{{n-s}}-{q}^{r-s}u_{{m+s}}u_{{n-r}}\,;
\end{equation}
and
\begin{equation}
u_{r - s} v_{n + m}  = u_{m - s} v_{n + r}  - q^{r - s} u_{m - r} v_{n + s}\,,
\end{equation}
\begin{equation}
u_{r - s} v_{n + m}  = u_{n - s} v_{m + r}  - q^{r - s} u_{n - r} v_{m + s}\,,
\end{equation}
\begin{equation}
u_{r - s} v_{n + m}  = u_{n + r} v_{m - s}  - q^{r - s} u_{n + s} v_{m - r}\,,
\end{equation}
\begin{equation}
u_{{r-s}}v_{{n+m}}=u_{{m+r}}v_{{n-s}}-{q}^{r-s}u_{{m+s}}v_{{n-r}}\,.
\end{equation}
Identities (H), (F), (G) and (J) have unlimited consequences. We list a few.
\begin{corollary}\label{cor.y980xry}
The following identities hold for integers $n$, $m$, $j$, $r$, $s$ and $t$:
\begin{equation}\label{eq.h1s1us8}
\text{$r=0$, $s=-m$ in (H) }\red{\implies} v_m w_n  = w_{n + m}  + q^m w_{n - m}\,,
\end{equation}
\begin{equation}\label{eq.zur6cp7}
v_n w_n  = w_{2n}  + q^n a\,,
\end{equation}
\begin{equation}\label{eq.e7izv7b}
\text{$r=0$, $s=-m$ in (F) }\red{\implies} u_m w_n  = u_n w_m  - q^m au_{n - m}\,,
\end{equation}
\begin{equation}\label{eq.fgo79vj}
\text{$r=1$, $s=0$ in (H) }\red{\implies} w_{n + m}  = u_m w_{n + 1}  - qu_{m - 1} w_n\,, 
\end{equation}
\begin{equation}\label{eq.hfbzyp7}
\text{$m\to -m$ in \eqref{eq.fgo79vj}} \red{\implies} q^m w_{n - m}  = u_{m +1} w_{n}  - u_{m} w_{n+1}\,, 
\end{equation}
\begin{equation}
\text{\eqref{eq.fgo79vj} $-$ \eqref{eq.hfbzyp7}} \red{\implies} w_{n + m}  - q^m w_{n - m}  = u_m (w_{n + 1}  - qw_{n - 1} )\,,
\end{equation}
\begin{equation}\label{eq.h2p9o4s}
w_{n + m}  = u_n w_{m + 1}  - qu_{n - 1} w_m\,, 
\end{equation}
\begin{equation}
w_{n + m}  = u_{m - j} w_{n + j + 1}  - qu_{m - j - 1} w_{n + j}\,,
\end{equation}
\begin{equation}
w_{n + m}  = u_{n - j} w_{m + j + 1}  - qu_{n - j - 1} w_{m + j}\,,
\end{equation}
\begin{equation}
w_{2n}  = u_n w_{n + 1}  - qu_{n - 1} w_n 
\end{equation}
\begin{equation}
w_{2n}  = u_{n + 1} w_n  - qu_n w_{n - 1}\,,
\end{equation}
\begin{equation}
w_{2n - 1}  = u_{n + 1} w_{n - 1}  - qu_n w_{n - 2}\,,
\end{equation}
\begin{equation}
w_{2n - 1}  = u_n w_n  - qu_{n - 1} w_{n - 1}\,,
\end{equation}
\begin{equation}\label{eq.c8nz8uo}
\text{$r=0$, $s=m-n$ in (H) }\red{\implies} u_{n - m} w_{n + m}  = u_n w_n  - q^{n - m} u_m w_m\,,
\end{equation}
\begin{equation}\label{eq.da86tc0}
\text{$r=0$, $s=-m$ in (F) }\red{\implies} u_{n - m} w_{n + m}  = u_{2n - m} w_m  - q^{n - m} u_n w_{2m - n}\,,
\end{equation}
\begin{equation}\label{eq.xzxwnfu}
\text{$m=0$ in \eqref{eq.da86tc0} }\red{\implies} q^n w_{ - n}  = av_n  - w_n\,,
\end{equation}
\begin{equation}\label{eq.wcmvj20}
\text{use of \eqref{eq.xzxwnfu} in \eqref{eq.h1s1us8}} \red{\implies} v_n w_m  - aq^mv_{n - m}  = w_{n + m}  - q^m w_{n - m}\,,
\end{equation}
\begin{equation}
\text{\eqref{eq.h1s1us8}$\times$\eqref{eq.wcmvj20}} \red{\implies} w_{n + m}^2  - q^{2m} w_{n - m}^2  = v_m w_n (v_n w_m  - aq^mv_{n - m} )\,,
\end{equation}
\begin{equation}\label{eq.wiwsjz4}
\text{$s=-r$ in (H) }\red{\implies} u_{2r} w_{n + m}  = u_{m + r} w_{n + r}  - q^{2r} u_{m - r} w_{n - r}\,,
\end{equation}
\begin{equation}
\text{$m\to -m$ in \eqref{eq.wiwsjz4}} \red{\implies} q^{m - r} u_{2r} w_{n - m}  = u_{m + r} w_{n - r}  - u_{m - r} w_{n + r}\,,
\end{equation}
\begin{equation}\label{eq.efn31zb}
u_{2r} w_{n + m}  = u_{n + r} w_{m + r}  - q^{2r} u_{n - r} w_{m - r}\,,
\end{equation}
\begin{equation}
u_{2r} w_{2n}  = u_{n + r} w_{n + r}  - q^{2r} u_{n - r} w_{n - r}\,,
\end{equation}
\begin{equation}
u_{2r} w_{2n - 1}  = u_{n + r} w_{n + r - 1}  - q^{2r} u_{n - r} w_{n - r - 1}\,,
\end{equation}
\begin{equation}
\text{$r=1$ in \eqref{eq.wiwsjz4} }\red{\implies} pw_{n + m}  = u_{m + 1} w_{n + 1}  - q^2 u_{m - 1} w_{n - 1}\,,
\end{equation}
\begin{equation}
pw_{n + m}  = u_{n + 1} w_{m + 1}  - q^2 u_{n - 1} w_{m - 1}\,,
\end{equation}
\begin{equation}
pw_{2n}  = u_{n + 1} w_{n + 1}  - q^2 u_{n - 1} w_{n - 1}\,,
\end{equation}
\begin{equation}
pw_{2n - 1}  = u_{n + 1} w_n  - q^2 u_{n - 1} w_{n - 2}\,,
\end{equation}
\begin{equation}\label{eq.ce4ih96}
\text{$m=0$, $r=t+s$ in (H) }\red{\implies} u_{{t}}w_{{n}}=u_{{s}}w_{{n+t-s}}-{q}^{t}u_{{s-t}}w_{{n-s}}\,,
\end{equation}
\begin{equation}
\text{$m=0$, $r=t+s$ in (F) }\red{\implies} u_{{t}}w_{{n}}=u_{{n-s}}w_{{t+s}}-{q}^{t}u_{{n-t-s}}w_{{s}}\,,
\end{equation}
\begin{equation}\label{eq.cz2qevd}
\text{$m=0$, $r=t+s$ in (G) }\red{\implies} u_{{t}}w_{{n}}=u_{{n+t-s}}w_{{s}}-{q}^{t}u_{{n-s}}w_{{s-t}}\,,
\end{equation}
\begin{equation}
\text{$m=0$, $r=t+s$ in (J) }\red{\implies} u_{{t}}w_{{n}}=u_{{t+s}}w_{{n-s}}-{q}^{t}u_{{s}}w_{{n-s-t}}\,.
\end{equation}
\end{corollary}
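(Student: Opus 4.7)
The plan is to verify each listed identity by specializing one of the four master identities (H), (F), (G), (J) with the substitutions indicated in the annotations, and to handle the unannotated or derived entries by combining these with elementary simplifications. The only ingredients needed beyond (H)--(J) are the base values $u_0=0$, $u_1=1$, $w_0=a$, the doubling identity $u_{2m}=u_m v_m$ from \eqref{eq.mdu5urs}, and the negative-index relation $u_{-n}=-u_n/q^n$ from \eqref{eq.t2c6ndd}.

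As a representative case, substituting $r=0$, $s=-m$ into (H) gives $u_m w_{n+m} = u_{2m} w_n - q^m u_m w_{n-m}$; using $u_{2m}=u_m v_m$ and cancelling $u_m$ produces \eqref{eq.h1s1us8}, and then setting $m=n$ together with $w_0=a$ yields \eqref{eq.zur6cp7}. Every identity carrying an explicit annotation---\eqref{eq.e7izv7b}, \eqref{eq.fgo79vj}, \eqref{eq.c8nz8uo}, \eqref{eq.da86tc0}, \eqref{eq.wiwsjz4}, \eqref{eq.efn31zb}, \eqref{eq.ce4ih96}, \eqref{eq.cz2qevd}, and the rest---is proved the same way: plug the prescribed values of $r$, $s$ (and sometimes $m$ or $n$) into the designated master identity and simplify the Lucas indices that collapse. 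For \eqref{eq.fgo79vj}, for instance, the substitution $r=1$, $s=0$ in (H) kills the factor $u_{r-s}=u_1=1$ on the left and leaves the stated three-term form.

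A second batch is obtained by the substitution $m \to -m$ applied to a previously proved line, followed by clearing factors of $q^{-m}$ via $u_{-n}=-u_n/q^n$; this delivers \eqref{eq.hfbzyp7} from \eqref{eq.fgo79vj} and \eqref{eq.wcmvj20} from inserting \eqref{eq.xzxwnfu} into \eqref{eq.h1s1us8}. A third batch consists of purely algebraic combinations, as in the difference \eqref{eq.fgo79vj} minus \eqref{eq.hfbzyp7} to produce $w_{n+m}-q^m w_{n-m}=u_m(w_{n+1}-qw_{n-1})$, and the product of \eqref{eq.h1s1us8} with \eqref{eq.wcmvj20} to give the difference-of-squares line. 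The unannotated entries such as \eqref{eq.h2p9o4s} and its variants with an extra parameter $j$ are obtained by identifying the correct master identity among (F), (G), (J) rather than (H); specifically, $r=1$, $s=0$ in (F) gives \eqref{eq.h2p9o4s}, and the $j$-shifted versions follow by the substitution $s \mapsto j$ (respectively $r \mapsto j+1$) in the same formulas, while the doubling specializations at $m=n$, $m=n-1$ and $n\to n-1$ yield the four $w_{2n}$ and $w_{2n-1}$ lines.

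The main obstacle is organizational rather than mathematical: one must select the correct variant among (H), (F), (G), (J) for each target identity and track the relabelings carefully, particularly in the $m\to -m$ reductions where sign errors and stray factors of $q^{\pm m}$ propagate easily. Once a clean substitution table is laid out, the whole corollary collapses into a sequence of one-line verifications.
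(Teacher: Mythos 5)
Your proposal is correct and follows essentially the same route as the paper, which offers no separate proof of the corollary beyond the substitution annotations embedded in each line: every identity is obtained by specializing (H), (F), (G) or (J) as indicated and simplifying with $u_{2m}=u_mv_m$, $u_{-n}=-u_n/q^n$, the recurrence, and the base values $u_0=0$, $u_1=1$, $w_0=a$. Your warning about tracking relabelings is apt, since some printed annotations require an implicit index shift---e.g.\ \eqref{eq.da86tc0} actually needs $r=0$, $s=m-n$ in (F) rather than the stated $s=-m$, and \eqref{eq.ce4ih96} needs $s\to-s$ together with the negative-index relation---but these are exactly the routine adjustments your recipe anticipates.
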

The identities in Corollary \ref{cor.y980xry} have interesting implications for the Lucas sequences. The following list is far from being exhaustive.
\begin{corollary}
The following identities hold for integers $n$, $m$, $r$, $s$ and $t$:
\begin{equation}
\text{$w_n=v_n$ in \eqref{eq.zur6cp7}}\red{\implies} v_n^2  = v_{2n}  + 2q^n \,,
\end{equation}
\begin{equation}\label{eq.bymjqnv}
\text{$w_n=v_n$ in \eqref{eq.e7izv7b}}\red{\implies} u_n v_m  - u_m v_n  = 2q^m u_{n - m} \,,
\end{equation}
\begin{equation}
\text{$w_n=v_n$, $m=0$ in \eqref{eq.h2p9o4s}}\red{\implies} v_n  = pu_n  - 2qu_{n - 1} \,,
\end{equation}
\begin{equation}
u_{n + m}  = u_m u_{n + 1}  - qu_{m - 1} u_n \,,
\end{equation}
\begin{equation}
v_{n + m}  = u_m v_{n + 1}  - qu_{m - 1} v_n \,,
\end{equation}
\begin{equation}
u_{2m - 1}  = u_m^2  - qu_{m - 1}^2 \,,
\end{equation}
\begin{equation}
v_{2m - 1}  = u_{2m}  - qu_{2m - 2} \,,
\end{equation}
\begin{equation}
\text{$w_n=u_n$ in \eqref{eq.c8nz8uo}}\red{\implies} u_{n - m} u_{n + m}  = u_n^2  - q^{n - m} u_m^2 \,,
\end{equation}
\begin{equation}
\text{$w_n=v_n$ in \eqref{eq.c8nz8uo}}\red{\implies} u_{n - m} v_{n - m}  = u_{2n}  - q^{n - m} u_{2m} \,,
\end{equation}
\begin{equation}
\text{\eqref{eq.nutjauf} and $w_n=v_n$ in \eqref{eq.wcmvj20}} \red{\implies} v_nv_m-(p^2-4q)u_mu_n=2q^mv_{n-m}\,,
\end{equation}
\begin{equation}
\text{$w_n=u_n$ in \eqref{eq.efn31zb}}\red{\implies} u_{2r} u_{n + m}  = u_{n + r} u_{m + r}  - q^{2r} u_{m - r} u_{n - r} \,,
\end{equation}
\begin{equation}
\text{$w_n=v_n$ in \eqref{eq.efn31zb}}\red{\implies} u_{2r} v_{n + m}  = u_{n + r} v_{m + r}  - q^{2r} u_{m - r} v_{n - r} \,,
\end{equation}
\begin{equation}
u_{2r} u_{2n}  = u_{n + r}^2  - q^{2r} u_{n - r}^2 \,,
\end{equation}
\begin{equation}
u_{2r} v_{2n}  = u_{2(n + r)}  - q^{2r} u_{2(n - r)} \,,
\end{equation}
\begin{equation}
pu_{2n}  = u_{n + 1}^2  - q^2 u_{n - 1}^2 \,,
\end{equation}
\begin{equation}
pv_{2n}  = u_{2(n + 1)}  - q^2 u_{2(n - 1)} \,,
\end{equation}
\begin{equation}
\text{$w_n=u_n$ in \eqref{eq.ce4ih96}}\red{\implies} u_t u_n  = u_s u_{n + t - s}  - q^t u_{s - t} u_{n - s} \,,
\end{equation}
\begin{equation}
\text{$w_n=v_n$ in \eqref{eq.ce4ih96}}\red{\implies} u_t v_n  = u_s v_{n + t - s}  - q^t u_{s - t} v_{n - s} \,,
\end{equation}
\begin{equation}\label{eq.fhr40g7}
\text{$s=0$, $w_n=v_n$ in \eqref{eq.cz2qevd}}\red{\implies} u_n v_t  + u_t v_n  = 2u_{n + t}\,,
\end{equation}
\begin{equation}
\text{$m=t$ in \eqref{eq.bymjqnv} $\times$ \eqref{eq.fhr40g7}}\red{\implies} u_n^2 v_t^2  - u_t^2 v_n^2  = 4q^t u_{n + t} u_{n - t} \,,
\end{equation}
\begin{equation}
p^2u_n^2 - v_n^2  = 4q u_{n + 1} u_{n - 1}\,.
\end{equation}
\end{corollary}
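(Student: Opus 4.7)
The plan is to obtain this identity as an immediate specialization of the preceding displayed identity
\[
u_n^2 v_t^2 - u_t^2 v_n^2 = 4q^t u_{n+t} u_{n-t}\,,
\]
by setting $t = 1$. From the definitions \eqref{eq.s6z1bcx} and \eqref{eq.zq47g0a} we read off $u_1 = 1$ and $v_1 = p$, so the left-hand side collapses to $p^2 u_n^2 - v_n^2$ and the right-hand side becomes $4q\,u_{n+1}u_{n-1}$, which is exactly the claim.

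For completeness, I would also note the self-contained derivation from earlier ingredients. Using the Binet-like formulas \eqref{eq.djr1ak1} one has the standard relations $v_n^2 - (p^2-4q)u_n^2 = 4q^n$ and $u_{n-1}u_{n+1} - u_n^2 = -q^{n-1}$ (both special cases of identities already in the excerpt; the second is \eqref{eq.c8nz8uo} with $w_n = u_n$ and $m = 1$, and the first is \eqref{eq.nutjauf} with $m = n$). Combining them,
\[
p^2 u_n^2 - v_n^2 = p^2 u_n^2 - \bigl((p^2-4q)u_n^2 + 4q^n\bigr) = 4q\bigl(u_n^2 - q^{n-1}\bigr) = 4q\,u_{n+1}u_{n-1}\,.
\]

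There is no genuine obstacle here; the identity follows by substitution once the chain of corollaries has been established. The only thing to be careful about is keeping track of the indexing so that the specialization $t=1$ is applied to the correctly signed form of \eqref{eq.bymjqnv} (with $m = t$) when multiplied into \eqref{eq.fhr40g7}, since the sign of the left-hand side depends on which factor is labelled $u_n v_t$ versus $u_t v_n$.
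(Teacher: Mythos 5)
Your proof is correct and takes essentially the route the paper intends: the final, unannotated identity $p^2u_n^2 - v_n^2 = 4q\,u_{n+1}u_{n-1}$ is exactly the $t=1$ specialization of the preceding identity $u_n^2v_t^2 - u_t^2v_n^2 = 4q^t u_{n+t}u_{n-t}$ via $u_1=1$, $v_1=p$, while all the other identities of the corollary carry their proofs in the annotated substitutions, just as in the paper. Your alternative self-contained derivation is also valid, with the minor caveat that the relation $v_n^2-(p^2-4q)u_n^2=4q^n$ does not follow from \eqref{eq.nutjauf} with $m=n$ alone but additionally requires $v_n^2=v_{2n}+2q^n$, i.e.\ \eqref{eq.u8cxhf1} with $m=n$ (equivalently \eqref{eq.zur6cp7} with $w_n=v_n$).
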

\section{Summation identities involving binomial coefficients}
\begin{theorem}\label{thm.tutbc4x}
The following identities hold for positive integer $k$ and arbitrary integers $r$, $s$, $n$, $m$:
\begin{equation}\label{eq.a4qiltd}
\sum_{j = 0}^k {  (-1)^jq^{(r-s)(k-j)} \binom kju_{m - s}^j u_{m - r}^{k - j} w_{n - (m - s)k + (r - s)j} }  =(-1)^k u_{r - s}^k w_n\,,
\end{equation}
\begin{equation}
\sum_{j = 0}^k {q^{(r-s)j} \binom kju_{r - s}^j u_{m - r}^{k - j} w_{n - (r - s)k + (m - s)j} }  = q^{(r-s)k} u_{m - s}^k w_n\,,
\end{equation}
\begin{equation}\label{eq.kf3kgmr}
\sum_{j = 0}^k {( - 1)^j \binom kju_{r - s}^j u_{m - s}^{k - j} w_{n + (r - s)k + (m - r)j} }  = q^{(r-s)k} u_{m - r}^k w_n\,,
\end{equation}
\begin{equation}\label{eq.sa53jkd}
\sum_{j = 0}^k { (-1)^jq^{(r-s)(k-j)} \binom kju_{m + r}^j u_{m + s}^{k - j} w_{n - (m + r)k + (r - s)j} }  = (-1)^ku_{r - s}^k w_n\,,
\end{equation}
\begin{equation}
\sum_{j = 0}^k {q^{(r-s)j} \binom kju_{r - s}^j u_{m + s}^{k - j} w_{n - (r - s)k + (m + r)j} }  = q^{(r-s)k} u_{m + r}^k w_n
\end{equation}
and
\begin{equation}\label{eq.kh2azr9}
\sum_{j = 0}^k {( - 1)^j \binom kju_{r - s}^j u_{m + r}^{k - j} w_{n + (r - s)k + (m + s)j} }  = q^{(r-s)k} u_{m + s}^k w_n\,.
\end{equation}

\end{theorem}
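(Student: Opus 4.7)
The plan is to read each of the master identities (H) and (J) as a three-term recurrence satisfied by the sequence $X_N = w_N$, and then to invoke the three binomial summation formulas of Lemma~\ref{lem.binomial} separately for each of these two recurrences, producing $3+3=6$ identities in total.

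Concretely, after the innocuous relabeling $n \mapsto N - m$, identity (H) takes the form
\[
u_{r-s}\, w_N = u_{m-s}\, w_{N-(m-r)} - q^{r-s}\, u_{m-r}\, w_{N-(m-s)}\,,
\]
which matches the template $h X_N = f_1 X_{N-c} + f_2 X_{N-d}$ of Lemma~\ref{lem.binomial} under the identifications
\[
h = u_{r-s},\quad f_1 = u_{m-s},\quad f_2 = -q^{r-s}\, u_{m-r},\quad c = m-r,\quad d = m-s.
\]
Substituting these into each of \eqref{eq.fe496kc}, \eqref{eq.j7k6a8g}, \eqref{eq.fnwrzi3} in turn, expanding the power $f_2^{k-j} = (-1)^{k-j}\, q^{(r-s)(k-j)}\, u_{m-r}^{k-j}$, collecting signs, and renaming the dummy $N$ back to $n$, should yield identities \eqref{eq.a4qiltd}, the second (unnumbered) identity, and \eqref{eq.kf3kgmr} respectively.

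For the last three identities I would repeat the recipe verbatim, but with (J) in place of (H). After the same shift $n \mapsto N-m$, identity (J) reads
\[
u_{r-s}\, w_N = u_{m+r}\, w_{N-(m+s)} - q^{r-s}\, u_{m+s}\, w_{N-(m+r)}\,,
\]
so Lemma~\ref{lem.binomial} applies with the new assignments
\[
h = u_{r-s},\quad f_1 = u_{m+r},\quad f_2 = -q^{r-s}\, u_{m+s},\quad c = m+s,\quad d = m+r.
\]
Feeding these into \eqref{eq.fe496kc}, \eqref{eq.j7k6a8g}, \eqref{eq.fnwrzi3} produces, after identical bookkeeping, \eqref{eq.sa53jkd}, the fifth (unnumbered) identity, and \eqref{eq.kh2azr9}.

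The only real obstacle is clerical. The minus sign buried inside $f_2$ propagates as $(-1)^{k-j}$ through $f_2^{k-j}$ and has to be combined with the explicit $(-1)^j$ that appears in \eqref{eq.j7k6a8g} and \eqref{eq.fnwrzi3}, leaving a single overall factor of $(-1)^k$ that is then absorbed into the right-hand side; simultaneously the factor $q^{(r-s)(k-j)}$ issuing from $f_2^{k-j}$ must be displayed in the form demanded by the statement of the theorem, and the index shift $N-dk+(d-c)j$ (or its analogues) must be rewritten in terms of $(r-s)$, $(m-s)$, $(m-r)$, $(m+s)$, $(m+r)$. No substantive algebraic insight beyond the master identities (H), (J) and Lemma~\ref{lem.binomial} is required.
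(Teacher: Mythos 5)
Your proposal is correct and is essentially the paper's own proof: the paper likewise rewrites (H) as $u_{r-s}w_n = u_{m-s}w_{n-(m-r)} - q^{r-s}u_{m-r}w_{n-(m-s)}$, makes the identical identifications $h=u_{r-s}$, $f_1=u_{m-s}$, $f_2=-q^{r-s}u_{m-r}$, $X_n=w_n$, $c=m-r$, $d=m-s$, and feeds them into Lemma~\ref{lem.binomial}. Your only deviation---obtaining \eqref{eq.sa53jkd}--\eqref{eq.kh2azr9} by applying the same recipe to (J) instead of substituting $r\to-s$, $s\to-r$ into \eqref{eq.a4qiltd}--\eqref{eq.kf3kgmr}---is immaterial, since (J) is exactly (H) under that substitution, so the two routes amount to the same computation.
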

\begin{proof}
To derive identities \eqref{eq.a4qiltd} -- \eqref{eq.kf3kgmr}, write identity (H) as
\[
u_{r - s} w_n  = u_{m - s} w_{n - (m - r)}  - q^{r - s} u_{m - r} w_{n - (m - s)}\,;
\]
identify $h=u_{r-s}$, $f_1=u_{m-s}$, $f_2=-q^{r-s}u_{m-r}$, $X_n=w_n$, $c=m-r$, $d=m-s$ and use these in Lemma \ref{lem.binomial}. Identities \eqref{eq.sa53jkd} -- \eqref{eq.kh2azr9} are obtained from identities \eqref{eq.a4qiltd} -- \eqref{eq.kf3kgmr} by interchanging $r$ and $-s$ and $s$ and $-r$.
\end{proof}
Particular cases of identities \eqref{eq.a4qiltd} -- \eqref{eq.kh2azr9} are the following summation identities involving only numbers from Lucas sequence of the first kind:
\begin{equation}
\sum_{j = 0}^k { (-1)^jq^{(r-s)(k-j)} \binom kju_{m - s}^j u_{m - r}^{k - j} u_{n - (m - s)k + (r - s)j} }  = (-1)^ku_{r - s}^k u_n\,,
\end{equation}
\begin{equation}
\sum_{j = 0}^k {q^{(r-s)j} \binom kju_{r - s}^j u_{m - r}^{k - j} u_{n - (r - s)k + (m - s)j} }  = q^{(r-s)k} u_{m - s}^k u_n\,,
\end{equation}
\begin{equation}
\sum_{j = 0}^k {( - 1)^j \binom kju_{r - s}^j u_{m - s}^{k - j} u_{n + (r - s)k + (m - r)j} }  = q^{(r-s)k} u_{m - r}^k u_n\,,
\end{equation}
\begin{equation}
\sum_{j = 0}^k { (-1)^jq^{(r-s)(k-j)} \binom kju_{m + r}^j u_{m + s}^{k - j} u_{n - (m + r)k + (r - s)j} }  = (-1)^ku_{r - s}^k u_n\,,
\end{equation}
\begin{equation}
\sum_{j = 0}^k {q^{(r-s)j} \binom kju_{r - s}^j u_{m + s}^{k - j} u_{n - (r - s)k + (m + r)j} }  = q^{(r-s)k} u_{m + r}^k u_n
\end{equation}
and
\begin{equation}\label{eq.gz6ate0}
\sum_{j = 0}^k {( - 1)^j \binom kju_{r - s}^j u_{m + r}^{k - j} u_{n + (r - s)k + (m + s)j} }  = q^{(r-s)k} u_{m + s}^k u_n\,;
\end{equation}
and the corresponding identities involving numbers from Lucas sequences of both kinds:
\begin{equation}
\sum_{j = 0}^k { (-1)^jq^{(r-s)(k-j)} \binom kju_{m - s}^j u_{m - r}^{k - j} v_{n - (m - s)k + (r - s)j} }  = (-1)^ku_{r - s}^k v_n\,,
\end{equation}
\begin{equation}
\sum_{j = 0}^k {q^{(r-s)j} \binom kju_{r - s}^j u_{m - r}^{k - j} v_{n - (r - s)k + (m - s)j} }  = q^{(r-s)k} u_{m - s}^k v_n\,,
\end{equation}
\begin{equation}
\sum_{j = 0}^k {( - 1)^j \binom kju_{r - s}^j u_{m - s}^{k - j} v_{n + (r - s)k + (m - r)j} }  = q^{(r-s)k} u_{m - r}^k v_n\,,
\end{equation}
\begin{equation}
\sum_{j = 0}^k { (-1)^jq^{(r-s)(k-j)} \binom kju_{m + r}^j u_{m + s}^{k - j} v_{n - (m + r)k + (r - s)j} }  = (-1)^ku_{r - s}^k v_n\,,
\end{equation}
\begin{equation}
\sum_{j = 0}^k {q^{(r-s)j} \binom kju_{r - s}^j u_{m + s}^{k - j} v_{n - (r - s)k + (m + r)j} }  = q^{(r-s)k} u_{m + r}^k v_n
\end{equation}
and
\begin{equation}
\sum_{j = 0}^k {( - 1)^j \binom kju_{r - s}^j u_{m + r}^{k - j} v_{n + (r - s)k + (m + s)j} }  = q^{(r-s)k} u_{m + s}^k v_n\,.
\end{equation}
\section{Summation identities not involving binomial coefficients}
\begin{theorem}\label{thm.c2gyoa1}
The following identities hold for $r$, $s$, $m$, $n$ and $k$ arbitrary integers:
\begin{equation}\label{eq.hlcov46}
\begin{split}
&u_{r - s} \sum_{j = 0}^k {q^{(s-r)j} w_{m + s}^{k - j} w_{m + r}^j w_{n - (r - s)k + m + s + (r - s)j} }\\
&\qquad= q^{(s-r)k} u_n w_{m + r}^{k + 1}  - q^{r - s} u_{n - (r - s)(k + 1)} w_{m + s}^{k + 1}
\end{split}
\end{equation}
and
\begin{equation}\label{eq.ao5nh45}
\begin{split}
&u_{r - s} \sum_{j = 0}^k {q^{(s-r)j} w_{m - r}^{k - j} w_{m - s}^j w_{n - (r - s)k + m - r + (r - s)j} }\\
&\qquad= q^{(s-r)k} u_n w_{m - s}^{k + 1}  - q^{r - s} u_{n - (r - s)(k + 1)} w_{m - r}^{k + 1}\,.
\end{split}
\end{equation}

\end{theorem}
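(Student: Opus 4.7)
The plan is to apply Lemma \ref{lem.u4bqbkc} with $X_n = u_n$ and $Y_n = w_n$, after first using the master identity (in one of its equivalent reformulations (F) or (G)) to produce a three-term recurrence of the required shape $h X_n = f_1 X_{n-c} + f_2 Y_{n-d}$ in which $u_n$ (rather than some shifted $u$) sits alone on the left.

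For identity \eqref{eq.hlcov46} I would begin from identity (F),
\[
u_{r-s}\,w_{n+m} = u_{n-s}\,w_{m+r} - q^{r-s} u_{n-r}\,w_{m+s},
\]
substitute $n \mapsto n + s$, and then solve for $u_n\,w_{m+r}$ to obtain the recurrence
\[
w_{m+r}\,u_n \;=\; q^{r-s}\,w_{m+s}\,u_{n-(r-s)} \;+\; u_{r-s}\,w_{n+m+s}.
\]
This is exactly the hypothesis of Lemma \ref{lem.u4bqbkc} with $h = w_{m+r}$, $f_1 = q^{r-s} w_{m+s}$, $f_2 = u_{r-s}$, $c = r-s$, and $d = -(m+s)$. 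Substituting these data into the conclusion of the lemma, then pulling $q^{(r-s)k}$ out of the summand via $q^{(r-s)(k-j)} = q^{(r-s)k}\,q^{(s-r)j}$ and dividing through by $q^{(r-s)k}$, yields \eqref{eq.hlcov46} directly.

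For \eqref{eq.ao5nh45} the procedure is the symmetric one: starting from identity (G),
\[
u_{r-s}\,w_{n+m} = u_{n+r}\,w_{m-s} - q^{r-s}\,u_{n+s}\,w_{m-r},
\]
and making the shift $n \mapsto n - r$ produces
\[
w_{m-s}\,u_n \;=\; q^{r-s}\,w_{m-r}\,u_{n-(r-s)} \;+\; u_{r-s}\,w_{n+m-r},
\]
so Lemma \ref{lem.u4bqbkc} applies with the same $c = r-s$ and $f_2 = u_{r-s}$, but now $h = w_{m-s}$, $f_1 = q^{r-s} w_{m-r}$, $d = r - m$. The same $q^{(r-s)k}$ rescaling step then delivers \eqref{eq.ao5nh45}.

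No step is genuinely difficult; the main obstacle is purely bookkeeping. One must pick the shift in $n$ so that the reformulated master identity takes the two-sequence recurrence form with $u_n$ isolated on the left, and then one must track the factor $q^{(r-s)k}$ that converts the natural output of Lemma \ref{lem.u4bqbkc} into the symmetric, normalised form in which the theorem is stated.
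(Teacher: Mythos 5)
Your proposal is correct and takes essentially the same route as the paper: both reduce the master identity to the recurrence $w_{m+r}u_n = q^{r-s}w_{m+s}u_{n-(r-s)} + u_{r-s}w_{n+m+s}$ and apply Lemma~\ref{lem.u4bqbkc} with $h=w_{m+r}$, $f_1=q^{r-s}w_{m+s}$, $f_2=u_{r-s}$, $c=r-s$, followed by the same division by $q^{(r-s)k}$ to reach the stated normalised form. Your choices of taking $Y_n=w_n$ with $d=-(m+s)$ (the paper uses $Y_n=w_{n+m+s}$ with $d=0$) and of re-running the lemma on identity (G) for \eqref{eq.ao5nh45} (the paper instead applies $r\to -s$, $s\to -r$ directly to \eqref{eq.hlcov46}) are trivially equivalent bookkeeping variants of the paper's proof.
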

\begin{proof}
To prove identity \eqref{eq.hlcov46}, write identity~(H) as
\begin{equation}
w_{m + r} u_n  = q^{r-s} w_{m + s} u_{n - (r - s)}  + u_{r - s} w_{n + m + s}\,,
\end{equation}
identify $h=w_{m+r}$, $f_1=q^{r-s}w_{m+s}$, $f_2=u_{r-s}$, $X_n=u_n$, $Y_n=w_{n+m+s}$, $c=r-s$ and $d=0$ and use these in Lemma \ref{lem.u4bqbkc}. Identity \eqref{eq.ao5nh45} is obtained from identity \eqref{eq.hlcov46} through the transformation $r\to -s$, $s\to -r$.
\end{proof}
\begin{theorem}\label{thm.ik24j18}
The following identities hold for arbitrary integers $r$, $s$, $n$, $m$ and $k$:
\begin{equation}
\begin{split}
&- q^{r - s} u_{m - r} \sum_{j = 0}^k {u_{m - s} ^{k - j} u_{r - s} ^j w_{n - (m - r)k - (m - s) + (m - r)j} }\\
&\qquad\qquad\qquad\qquad= u_{r - s} ^{k + 1} w_n  - u_{m - s} ^{k + 1} w_{n - (m - r)(k + 1)}\,,
\end{split}
\end{equation}
\begin{equation}
\begin{split}
&(-1)^ku_{m - s} \sum_{j = 0}^k { (-1)^jq^{(r-s)(k-j)} u_{m - r} ^{k - j} u_{r - s} ^j w_{n - (m - s)k - (m - r) + (m - s)j} }\\
&\qquad\qquad\qquad= u_{r - s} ^{k + 1} w_n  - (-1)^{k+1}q^{(r-s)(k+1)} u_{m - r} ^{k + 1} w_{n - (m - s)(k + 1)}\,,
\end{split}
\end{equation}
\begin{equation}
\begin{split}
&u_{r - s} \sum_{j = 0}^k {q^{(s-r)j} u_{m - r} ^{k - j} u_{m - s} ^j w_{n - (r - s)k + (m - r) + (r - s)j} }\\
&\qquad\qquad= q^{(s-r)k} u_{m - s} ^{k + 1} w_n  - q^{r - s} u_{m - r} ^{k + 1} w_{n - (r - s)(k + 1)}\,,
\end{split}
\end{equation}
\begin{equation}
\begin{split}
&- q^{r - s} u_{m + s} \sum_{j = 0}^k {u_{m + r} ^{k - j} u_{r - s} ^j w_{n - (m + s)k - (m + r) + (m + s)j} }\qquad\\
&\qquad\qquad\qquad\qquad= u_{r - s} ^{k + 1} w_n  - u_{m + r} ^{k + 1} w_{n - (m + s)(k + 1)}\,,
\end{split}
\end{equation}
\begin{equation}
\begin{split}
&(-1)^ku_{m + r} \sum_{j = 0}^k { (-1)^jq^{(r-s)(k-j)} u_{m + s} ^{k - j} u_{r - s} ^j w_{n - (m + r)k - (m + s) + (m + r)j} }\\
&\qquad\qquad\qquad= u_{r - s} ^{k + 1} w_n  - (-1)^{k+1}q^{(r-s)(k+1)} u_{m + s} ^{k + 1} w_{n - (m + r)(k + 1)}
\end{split}
\end{equation}
and
\begin{equation}
\begin{split}
&u_{r - s} \sum_{j = 0}^k {q^{(s-r)j} u_{m + s} ^{k - j} u_{m + r} ^j w_{n - (r - s)k + (m + s) + (r - s)j} }\\
&\qquad\qquad= q^{(s-r)k} u_{m + r} ^{k + 1} w_n  - q^{r - s} u_{m + s} ^{k + 1} w_{n - (r - s)(k + 1)}\,.
\end{split}
\end{equation}

\end{theorem}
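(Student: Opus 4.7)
The plan is to recognize identity~(H) as a three-term recurrence for the Horadam sequence $(w_n)$, and then apply Lemma~\ref{lem.s9jfs7n} in its three forms to obtain three of the six claimed identities. The remaining three will follow by the involution $r\to -s,\; s\to -r$, in the same style as the proof of Theorem~\ref{thm.c2gyoa1}.

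First I would rewrite identity~(H) in the form
\begin{equation*}
u_{r - s}\, w_n = u_{m - s}\, w_{n-(m-r)} \;-\; q^{r-s}\, u_{m-r}\, w_{n-(m-s)}\,,
\end{equation*}
which is legal because $r,s,n,m$ are arbitrary integers. Reading this as $hX_n = f_1 X_{n-c} + f_2 X_{n-d}$, I would identify
\[
X_n = w_n,\qquad h = u_{r-s},\qquad f_1 = u_{m-s},\qquad f_2 = -\,q^{r-s}\, u_{m-r},\qquad c = m-r,\qquad d = m-s.
\]
Substituting these choices into equations \eqref{eq.mxyb9zk}, \eqref{eq.cgldajj} and \eqref{eq.n2n4ec3} of Lemma~\ref{lem.s9jfs7n} then produces, respectively, the first, second and third identities of the theorem. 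The fourth, fifth and sixth identities arise from the first three via the substitution $r\to -s,\; s\to -r$, under which $r-s$ (and hence $u_{r-s}$ and $q^{r-s}$) is invariant while $m-r$ and $m-s$ become $m+s$ and $m+r$; thus the last three identities are obtained with no further work.

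The only delicate point is the bookkeeping of signs and powers of $q$ generated by $f_2 = -\,q^{r-s}u_{m-r}$. For the first identity the factor $(-q^{r-s}u_{m-r})$ produced by $f_2$ in front of the sum must be matched against the lone $-q^{r-s}u_{m-r}$ written in the statement; for the second, the raw output $(-1)^{k-j}q^{(r-s)(k-j)}u_{m-r}^{k-j}$ from $f_2^{k-j}$ is repackaged using $(-1)^{k-j}=(-1)^k(-1)^j$, yielding the global $(-1)^k$ and the $(-1)^j$ inside the sum, together with the matching $(-1)^{k+1}q^{(r-s)(k+1)}u_{m-r}^{k+1}$ on the right; and for the third, an overall factor $(-1)^k$ has to be pulled through, after which both sides are multiplied by $q^{(s-r)k}$ to convert the $q^{(r-s)(k-j)}$ inside the sum into the stated $q^{(s-r)j}$ and to recast the right-hand side as $q^{(s-r)k}u_{m-s}^{k+1}w_n - q^{r-s}u_{m-r}^{k+1}w_{n-(r-s)(k+1)}$. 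I expect this power-of-$q$ rescaling in the third identity to be the main (mild) obstacle; everything else is a direct reading of Lemma~\ref{lem.s9jfs7n}.
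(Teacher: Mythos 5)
Your proposal is correct and takes essentially the same route as the paper: the paper's proof likewise applies Lemma~\ref{lem.s9jfs7n} with $X_n=w_n$ and the identifications $h=u_{r-s}$, $f_1=u_{m-s}$, $f_2=-q^{r-s}u_{m-r}$, $c=m-r$, $d=m-s$ carried over from the proof of Theorem~\ref{thm.tutbc4x}, with the last three identities obtained by the same interchange $r\to -s$, $s\to -r$. Your bookkeeping of the signs from $f_2^{k-j}$ and of the $q^{(s-r)k}$ rescaling in the third identity checks out exactly against the stated forms.
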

\begin{proof}
In Lemma \ref{lem.s9jfs7n}, with $X_n=w_n$, use the $h$, $f_1$, $f_2$, $c$ and $d$ obtained in the proof of Theorem \ref{thm.tutbc4x}.
\end{proof}
In particular, we have the following summation identities involving only numbers from Lucas sequence of the first kind:
\begin{equation}
\begin{split}
&- q^{r - s} u_{m - r} \sum_{j = 0}^k {u_{m - s} ^{k - j} u_{r - s} ^j u_{n - (m - r)k - (m - s) + (m - r)j} }\\
&\qquad\qquad\qquad\qquad= u_{r - s} ^{k + 1} u_n  - u_{m - s} ^{k + 1} u_{n - (m - r)(k + 1)}\,,
\end{split}
\end{equation}
\begin{equation}
\begin{split}
&(-1)^ku_{m - s} \sum_{j = 0}^k { (-1)^jq^{(r-s)(k-j)} u_{m - r} ^{k - j} u_{r - s} ^j u_{n - (m - s)k - (m - r) + (m - s)j} }\\
&\qquad\qquad\qquad= u_{r - s} ^{k + 1} u_n  - (-1)^{k+1}q^{(r-s)(k+1)} u_{m - r} ^{k + 1} u_{n - (m - s)(k + 1)}\,,
\end{split}
\end{equation}
\begin{equation}
\begin{split}
&u_{r - s} \sum_{j = 0}^k {q^{(s-r)j} u_{m - r} ^{k - j} u_{m - s} ^j u_{n - (r - s)k + (m - r) + (r - s)j} }\\
&\qquad\qquad= q^{(s-r)k} u_{m - s} ^{k + 1} u_n  - q^{r - s} u_{m - r} ^{k + 1} u_{n - (r - s)(k + 1)}\,,
\end{split}
\end{equation}
\begin{equation}
\begin{split}
&- q^{r - s} u_{m + s} \sum_{j = 0}^k {u_{m + r} ^{k - j} u_{r - s} ^j u_{n - (m + s)k - (m + r) + (m + s)j} }\qquad\\
&\qquad\qquad\qquad\qquad= u_{r - s} ^{k + 1} u_n  - u_{m + r} ^{k + 1} u_{n - (m + s)(k + 1)}\,,
\end{split}
\end{equation}
\begin{equation}
\begin{split}
&(-1)^ku_{m + r} \sum_{j = 0}^k { (-1)^jq^{(r-s)(k-j)} u_{m + s} ^{k - j} u_{r - s} ^j u_{n - (m + r)k - (m + s) + (m + r)j} }\\
&\qquad\qquad\qquad= u_{r - s} ^{k + 1} u_n  - (-1)^{k+1}q^{(r-s)(k+1)} u_{m + s} ^{k + 1} u_{n - (m + r)(k + 1)}\,,
\end{split}
\end{equation}
and
\begin{equation}
\begin{split}
&u_{r - s} \sum_{j = 0}^k {q^{(s-r)j} u_{m + s} ^{k - j} u_{m + r} ^j u_{n - (r - s)k + (m + s) + (r - s)j} }\\
&\qquad\qquad= q^{(s-r)k} u_{m + r} ^{k + 1} u_n  - q^{r - s} u_{m + s} ^{k + 1} u_{n - (r - s)(k + 1)}\,;
\end{split}
\end{equation}
and the corresponding results involving numbers from Lucas sequences of both kinds:
\begin{equation}
\begin{split}
&- q^{r - s} u_{m - r} \sum_{j = 0}^k {u_{m - s} ^{k - j} u_{r - s} ^j v_{n - (m - r)k - (m - s) + (m - r)j} }\\
&\qquad\qquad\qquad\qquad= u_{r - s} ^{k + 1} v_n  - u_{m - s} ^{k + 1} v_{n - (m - r)(k + 1)}\,,
\end{split}
\end{equation}
\begin{equation}
\begin{split}
&(-1)^ku_{m - s} \sum_{j = 0}^k { (-1)^jq^{(r-s)(k-j)} u_{m - r} ^{k - j} u_{r - s} ^j v_{n - (m - s)k - (m - r) + (m - s)j} }\\
&\qquad\qquad\qquad= u_{r - s} ^{k + 1} v_n  - (-1)^{k+1}q^{(r-s)(k+1)} u_{m - r} ^{k + 1} v_{n - (m - s)(k + 1)}\,,
\end{split}
\end{equation}
\begin{equation}
\begin{split}
&u_{r - s} \sum_{j = 0}^k {q^{(s-r)j} u_{m - r} ^{k - j} u_{m - s} ^j v_{n - (r - s)k + (m - r) + (r - s)j} }\\
&\qquad\qquad= q^{(s-r)k} u_{m - s} ^{k + 1} v_n  - q^{r - s} u_{m - r} ^{k + 1} v_{n - (r - s)(k + 1)}\,.
\end{split}
\end{equation}
\begin{equation}
\begin{split}
&- q^{r - s} u_{m + s} \sum_{j = 0}^k {u_{m + r} ^{k - j} u_{r - s} ^j v_{n - (m + s)k - (m + r) + (m + s)j} }\qquad\\
&\qquad\qquad\qquad\qquad= u_{r - s} ^{k + 1} v_n  - u_{m + r} ^{k + 1} v_{n - (m + s)(k + 1)}\,,
\end{split}
\end{equation}
\begin{equation}
\begin{split}
&(-1)^ku_{m + r} \sum_{j = 0}^k { (-1)^jq^{(r-s)(k-j)} u_{m + s} ^{k - j} u_{r - s} ^j v_{n - (m + r)k - (m + s) + (m + r)j} }\\
&\qquad\qquad\qquad= u_{r - s} ^{k + 1} v_n  - (-1)^{k+1}q^{(r-s)(k+1)} u_{m + s} ^{k + 1} v_{n - (m + r)(k + 1)}\,,
\end{split}
\end{equation}
and
\begin{equation}
\begin{split}
&u_{r - s} \sum_{j = 0}^k {q^{(s-r)j} u_{m + s} ^{k - j} u_{m + r} ^j v_{n - (r - s)k + (m + s) + (r - s)j} }\\
&\qquad\qquad= q^{(s-r)k} u_{m + r} ^{k + 1} v_n  - q^{r - s} u_{m + s} ^{k + 1} v_{n - (r - s)(k + 1)}\,.
\end{split}
\end{equation}
\section{Summation identities involving reciprocals}
\begin{theorem}
The following identities hold for values of $r$, $s$, $m$, $n$, $k$ for which the summand is non-singular in the summation interval:
\begin{equation}\label{eq.vd3kfav}
\begin{split}
u_nu_{n - (r - s)(k + 1)}u_{r - s} &\sum_{j = 0}^k {q^{(r-s)j} \frac{{w_{m + r}^{k - j} w_{m + s}^j w_{n + m + s - (r - s)k + (r - s)j} }}{{u_{n - (r - s)k + (r - s)j} u_{n - r + s - (r - s)k + (r - s)j} }}}\\
&\qquad\qquad= u_nw_{m + r}^{k + 1} - q^{(r-s)(k+1)} u_{n - (r - s)(k + 1)}w_{m + s}^{k + 1}\,,
\end{split}
\end{equation}
\begin{equation}\label{eq.jwgeagg}
\begin{split}
u_nu_{n - (r - s)(k + 1)}u_{r - s} &\sum_{j = 0}^k {q^{(r-s)j} \frac{{w_{m - s}^{k - j} w_{m - r}^j w_{n + m - r - (r - s)k + (r - s)j} }}{{u_{n - (r - s)k + (r - s)j} u_{n + s - r - (r - s)k + (r - s)j} }}}\\
&\qquad\qquad= u_nw_{m - s}^{k + 1} - q^{(r-s)(k+1)} u_{n - (r - s)(k + 1)}w_{m - r}^{k + 1}\,.
\end{split}
\end{equation}

\end{theorem}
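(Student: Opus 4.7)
The plan is to apply Lemma \ref{lem.reciprocal1} directly, reusing the same identifications that were set up in the proof of Theorem \ref{thm.c2gyoa1}. The master identity (H), rewritten as
\[
w_{m+r}\,u_n \;=\; q^{r-s}\,w_{m+s}\,u_{n-(r-s)} \;+\; u_{r-s}\,w_{n+m+s},
\]
is already in the two-sequence three-term-recurrence shape required by Lemma \ref{lem.reciprocal1}, with $X_n=u_n$ and $Y_n=w_{n+m+s}$ linked by the ``coefficients'' $h=w_{m+r}$, $f_1=q^{r-s}w_{m+s}$, $f_2=u_{r-s}$ and the index shifts $c=r-s$, $d=0$.

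First I would substitute these data into the conclusion of Lemma \ref{lem.reciprocal1},
\[
X_n X_{n-c(k+1)}\,f_2 \sum_{j=0}^{k} h^{k-j} f_1^{j}\,\frac{Y_{n-d-ck+cj}}{X_{n-ck+cj}X_{n-c-ck+cj}} \;=\; h^{k+1} X_n - f_1^{k+1} X_{n-c(k+1)},
\]
and then simplify. The powers combine as $h^{k-j}f_1^{j}=w_{m+r}^{k-j}\,q^{(r-s)j}\,w_{m+s}^{j}$ and $f_1^{k+1}=q^{(r-s)(k+1)}w_{m+s}^{k+1}$, while the subscript shift $-d-ck+cj=-(r-s)k+(r-s)j$ applied to $Y_n=w_{n+m+s}$ produces exactly $w_{n+m+s-(r-s)k+(r-s)j}$, and the two denominator factors become $u_{n-(r-s)k+(r-s)j}$ and $u_{n-r+s-(r-s)k+(r-s)j}$. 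Matching this against \eqref{eq.vd3kfav} is a straightforward verification; no genuine calculation beyond bookkeeping is required.

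For the second identity \eqref{eq.jwgeagg} I would invoke the symmetry already exploited in Theorem \ref{thm.c2gyoa1}: apply the transformation $r\mapsto -s$, $s\mapsto -r$ to \eqref{eq.vd3kfav}. Under this swap $r-s$ is invariant, while $m+r\mapsto m-s$, $m+s\mapsto m-r$, $n+m+s\mapsto n+m-r$, and $n-r+s\mapsto n+s-r$, which turns \eqref{eq.vd3kfav} into \eqref{eq.jwgeagg} term by term.

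The only real obstacle is the one encountered throughout this paper, namely an accurate reconciliation of the many index shifts; there is no combinatorial or analytic difficulty, since Lemma \ref{lem.reciprocal1} has already done all the work once identity (H) has been cast in the correct recurrence form.
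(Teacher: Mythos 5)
Your proposal is correct and is essentially identical to the paper's own proof: the paper likewise applies Lemma \ref{lem.reciprocal1} with $X_n=u_n$, $Y_n=w_{n+m+s}$ and the data $h=w_{m+r}$, $f_1=q^{r-s}w_{m+s}$, $f_2=u_{r-s}$, $c=r-s$, $d=0$ carried over from the proof of Theorem \ref{thm.c2gyoa1}, obtaining \eqref{eq.jwgeagg} from \eqref{eq.vd3kfav} by the same substitution $r\to -s$, $s\to -r$. Your explicit verification of the index and power bookkeeping, which the paper leaves to the reader, checks out.
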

\begin{proof}
In Lemma \ref{lem.reciprocal1}, make the identification $X_n=u_n$ and $Y_n=w_{n+m+s}$ and use the $f_1$, $f_2$, $h$, $c$ and $d$ obtained in the proof of Theorem \ref{thm.c2gyoa1}.
\end{proof}
Particular cases of identities \eqref{eq.vd3kfav} and \eqref{eq.jwgeagg} are the following:
\begin{equation}
\begin{split}
u_nu_{n - (r - s)(k + 1)}u_{r - s} &\sum_{j = 0}^k {q^{(r-s)j} \frac{{u_{m + r}^{k - j} u_{m + s}^j u_{n + m + s - (r - s)k + (r - s)j} }}{{u_{n - (r - s)k + (r - s)j} u_{n - r + s - (r - s)k + (r - s)j} }}}\\
&\qquad\qquad= u_nu_{m + r}^{k + 1} - q^{(r-s)(k+1)} u_{n - (r - s)(k + 1)}u_{m + s}^{k + 1}\,,
\end{split}
\end{equation}
\begin{equation}
\begin{split}
u_nu_{n - (r - s)(k + 1)}u_{r - s} &\sum_{j = 0}^k {q^{(r-s)j} \frac{{u_{m - s}^{k - j} u_{m - r}^j u_{n + m - r - (r - s)k + (r - s)j} }}{{u_{n - (r - s)k + (r - s)j} u_{n + s - r - (r - s)k + (r - s)j} }}}\\
&\qquad\qquad= u_nu_{m - s}^{k + 1} - q^{(r-s)(k+1)} u_{n - (r - s)(k + 1)}u_{m - r}^{k + 1}\,;
\end{split}
\end{equation}
and
\begin{equation}
\begin{split}
u_nu_{n - (r - s)(k + 1)}u_{r - s} &\sum_{j = 0}^k {q^{(r-s)j} \frac{{v_{m + r}^{k - j} v_{m + s}^j v_{n + m + s - (r - s)k + (r - s)j} }}{{u_{n - (r - s)k + (r - s)j} u_{n - r + s - (r - s)k + (r - s)j} }}}\\
&\qquad\qquad= u_nv_{m + r}^{k + 1} - q^{(r-s)(k+1)} u_{n - (r - s)(k + 1)}v_{m + s}^{k + 1}\,,
\end{split}
\end{equation}
\begin{equation}
\begin{split}
u_nu_{n - (r - s)(k + 1)}u_{r - s} &\sum_{j = 0}^k {q^{(r-s)j} \frac{{v_{m - s}^{k - j} v_{m - r}^j v_{n + m - r - (r - s)k + (r - s)j} }}{{u_{n - (r - s)k + (r - s)j} u_{n + s - r - (r - s)k + (r - s)j} }}}\\
&\qquad\qquad= u_nv_{m - s}^{k + 1} - q^{(r-s)(k+1)} u_{n - (r - s)(k + 1)}v_{m - r}^{k + 1}\,.
\end{split}
\end{equation}
\begin{theorem}
The following identities hold for values of $r$, $s$, $m$, $n$, $k$ for which the summand is non-singular in the summation interval:
\begin{equation}
\begin{split}
- q^{r - s} u_{m - r} w_n w_{n - (m - r)(k + 1)} &\sum_{j = 0}^k {\frac{{u_{r - s}^{k - j} u_{m - s}^j w_{n - m + s - (m - r)k + (m - r)j} }}{{w_{n - (m - r)k + (m - r)j} w_{n - (m - r) - (m - r)k + (m - r)j} }}}\\
&\quad= u_{r - s} ^{k + 1} w_n  - u_{m - s}^{k + 1} w_{n - (m - r)(k + 1)}\,,
\end{split}
\end{equation}
\begin{equation}
\begin{split}
u_{m - s} w_n w_{n - (m - s)(k + 1)} &\sum_{j = 0}^k {\frac{(-1)^jq^{(r-s)j}u_{r - s}^{k - j} { u_{m - r}}^j w_{n - (m - r) - (m - s)k + (m - s)j} }{{w_{n - (m - s)k + (m - s)j} w_{n - (m - s) - (m - s)k + (m - s)j} }}}\\
&\quad= u_{r - s} ^{k + 1} w_n  - (-1)^{k+1}q^{(r-s)(k+1)}u_{m - r}^{k + 1} w_{n - (m - s)(k + 1)}\,,
\end{split}
\end{equation}
\begin{equation}
\begin{split}
u_{r-s}w_nw_{n - (r - s)(k + 1)}&\sum_{j = 0}^k {\frac{q^{(r-s)j}u_{m - s}^{k-j}u_{m-r}^jw_{n + m - r - (r - s)k + (r - s)j} }{{w_{n - (r - s)k + (r - s)j} w_{n - (r - s) - (r - s)k + (r - s)j} }}}\\
&\qquad= u_{m - s}^{k+1}w_n -q^{(r-s)(k+1)}u_{m-r}^{k+1}w_{n - (r - s)(k + 1)}\,,
\end{split}
\end{equation}
\begin{equation}
\begin{split}
- q^{r - s} u_{m + s} w_n w_{n - (m + s)(k + 1)} &\sum_{j = 0}^k {\frac{{u_{r - s}^{k - j} u_{m + r}^j w_{n - m - r - (m + s)k + (m + s)j} }}{{w_{n - (m + s)k + (m + s)j} w_{n - (m + s) - (m + s)k + (m + s)j} }}}\\
&\quad= u_{r - s} ^{k + 1} w_n  - u_{m + r}^{k + 1} w_{n - (m + s)(k + 1)}\,,
\end{split}
\end{equation}
\begin{equation}
\begin{split}
u_{m + r} w_n w_{n - (m + r)(k + 1)} &\sum_{j = 0}^k {\frac{(-1)^jq^{(r-s)j}u_{r - s}^{k - j} { u_{m + s}}^j w_{n - (m + s) - (m + r)k + (m + r)j} }{{w_{n - (m + r)k + (m + r)j} w_{n - (m + r) - (m + r)k + (m + r)j} }}}\\
&\quad= u_{r - s} ^{k + 1} w_n  - (-1)^{k+1}q^{(r-s)(k+1)}u_{m + s}^{k + 1} w_{n - (m + r)(k + 1)}\,,
\end{split}
\end{equation}
and
\begin{equation}
\begin{split}
u_{r - s}w_nw_{n - (r - s)(k + 1)}&\sum_{j = 0}^k {\frac{q^{(r-s)j}u_{m + r}^{k-j}u_{m + s}^jw_{n + m + s - (r - s)k + (r - s)j} }{{w_{n - (r - s)k + (r - s)j} w_{n - (r - s) - (r - s)k + (r - s)j} }}}\\
&\qquad= u_{m + r}^{k+1}w_n -q^{(r-s)(k+1)}u_{m + s}^{k+1}w_{n - (r - s)(k + 1)}\,.
\end{split}
\end{equation}

\end{theorem}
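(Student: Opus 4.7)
The plan is to treat this theorem as the direct reciprocal analogue of Theorems~\ref{thm.tutbc4x} and \ref{thm.ik24j18}, reusing exactly the recurrence identification that was exploited there but now feeding it into Lemma~\ref{lem.reciprocal} in place of Lemmas~\ref{lem.binomial} and \ref{lem.s9jfs7n}. Concretely, I first rewrite the master identity~(H) by shifting the summation index $n\to n-m$ to obtain
\[
u_{r-s}\,w_n \;=\; u_{m-s}\,w_{n-(m-r)} \;-\; q^{r-s}\,u_{m-r}\,w_{n-(m-s)},
\]
which is of the form $hX_n=f_1X_{n-c}+f_2X_{n-d}$ for the sequence $X_n=w_n$, with $h=u_{r-s}$, $f_1=u_{m-s}$, $f_2=-q^{r-s}u_{m-r}$, $c=m-r$ and $d=m-s$, i.e.\ precisely the identification already used in the proof of Theorem~\ref{thm.tutbc4x}.

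Next I substitute these choices of $h$, $f_1$, $f_2$, $c$, $d$ into each of the three reciprocal identities \eqref{eq.q1q4vob}, \eqref{eq.mg9cdve} and \eqref{eq.l8b84rc} of Lemma~\ref{lem.reciprocal}. Identity \eqref{eq.q1q4vob} yields the first displayed identity of the theorem, \eqref{eq.mg9cdve} yields the third, and \eqref{eq.l8b84rc} yields the second (with its alternating signs). The remaining three identities of the theorem are then obtained by the same symmetry trick used at the end of the proof of Theorem~\ref{thm.tutbc4x}, namely the substitution $r\mapsto -s$, $s\mapsto -r$, which leaves $r-s$ invariant but swaps the roles $m-s\leftrightarrow m+r$ and $m-r\leftrightarrow m+s$.

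The main obstacle is entirely bookkeeping rather than conceptual. Because $f_2=-q^{r-s}u_{m-r}$ carries both a sign and a power of $q$, each appearance of $f_2^{k-j}$, $f_2^j$ or $f_2^{k+1}$ inside Lemma~\ref{lem.reciprocal} must be broken up to redistribute factors of $(-1)^{\bullet}$ and $q^{(r-s)\bullet}$ in order to match the patterns $q^{(r-s)j}$, $(-1)^j q^{(r-s)j}$ and $(-1)^{k+1}q^{(r-s)(k+1)}$ displayed in the statement. Simultaneously, the index shifts inside the summand (and the outer factor $w_{n-(m-r)(k+1)}$, etc.) must be rewritten using $c=m-r$, $d=m-s$, or their swapped counterparts, to reproduce the stated arguments such as $n-(m-r)k+(m-r)j$ and $n-m+s-(m-r)k+(m-r)j$. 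These manipulations are routine and introduce no genuinely new ingredient; once they are carried out, all six identities drop out.
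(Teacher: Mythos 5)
Your approach is exactly the paper's: the paper proves this theorem by taking $X_n=w_n$ and substituting $h=u_{r-s}$, $f_1=u_{m-s}$, $f_2=-q^{r-s}u_{m-r}$, $c=m-r$, $d=m-s$ (the identification from Theorem~\ref{thm.tutbc4x}, carried through Theorem~\ref{thm.ik24j18}) into Lemma~\ref{lem.reciprocal}, with the last three identities obtained by the same substitution $r\mapsto -s$, $s\mapsto -r$ that you describe. Your only slip is the pairing of \eqref{eq.mg9cdve} and \eqref{eq.l8b84rc} with the theorem's identities, which is reversed: since $f_2=-q^{r-s}u_{m-r}$, in \eqref{eq.l8b84rc} the explicit $(-1)^j$ cancels against the sign carried by $f_2^j$ and $(-1)^kf_2^{k+1}=-q^{(r-s)(k+1)}u_{m-r}^{k+1}$, so \eqref{eq.l8b84rc} yields the \emph{sign-free} third identity, while it is \eqref{eq.mg9cdve}, through $f_2^j=(-1)^jq^{(r-s)j}u_{m-r}^j$, that supplies the alternating $(-1)^j$ of the second identity --- a harmless relabeling that the routine bookkeeping you already propose would expose and fix.
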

\begin{proof}
In Lemma \ref{lem.reciprocal}, make the identification $X_n=w_n$ and use the $f_1$, $f_2$, $h$, $c$ and $d$ obtained in the proof of Theorem \ref{thm.ik24j18}.
\end{proof}
In particular, we have the summation identities involving only numbers from Lucas sequence of the first kind:
\begin{equation}
\begin{split}
- q^{r - s} u_{m - r} u_n u_{n - (m - r)(k + 1)} &\sum_{j = 0}^k {\frac{{u_{r - s}^{k - j} u_{m - s}^j u_{n - m + s - (m - r)k + (m - r)j} }}{{u_{n - (m - r)k + (m - r)j} u_{n - (m - r) - (m - r)k + (m - r)j} }}}\\
&\quad= u_{r - s} ^{k + 1} u_n  - u_{m - s}^{k + 1} u_{n - (m - r)(k + 1)}\,,
\end{split}
\end{equation}
\begin{equation}
\begin{split}
u_{m - s} u_n u_{n - (m - s)(k + 1)} &\sum_{j = 0}^k {\frac{(-1)^jq^{(r-s)j}u_{r - s}^{k - j} { u_{m - r}}^j u_{n - (m - r) - (m - s)k + (m - s)j} }{{u_{n - (m - s)k + (m - s)j} u_{n - (m - s) - (m - s)k + (m - s)j} }}}\\
&\quad= u_{r - s} ^{k + 1} u_n  - (-1)^{k+1}q^{(r-s)(k+1)}u_{m - r}^{k + 1} u_{n - (m - s)(k + 1)}\,,
\end{split}
\end{equation}
\begin{equation}
\begin{split}
u_{r-s}u_nu_{n - (r - s)(k + 1)}&\sum_{j = 0}^k {\frac{q^{(r-s)j}u_{m - s}^{k-j}u_{m-r}^ju_{n + m - r - (r - s)k + (r - s)j} }{{u_{n - (r - s)k + (r - s)j} u_{n - (r - s) - (r - s)k + (r - s)j} }}}\\
&\qquad= u_{m - s}^{k+1}u_n -q^{(r-s)(k+1)}u_{m-r}^{k+1}u_{n - (r - s)(k + 1)}\,,
\end{split}
\end{equation}
\begin{equation}
\begin{split}
- q^{r - s} u_{m + s} u_n u_{n - (m + s)(k + 1)} &\sum_{j = 0}^k {\frac{{u_{r - s}^{k - j} u_{m + r}^j u_{n - m - r - (m + s)k + (m + s)j} }}{{u_{n - (m + s)k + (m + s)j} u_{n - (m + s) - (m + s)k + (m + s)j} }}}\\
&\quad= u_{r - s} ^{k + 1} u_n  - u_{m + r}^{k + 1} u_{n - (m + s)(k + 1)}\,,
\end{split}
\end{equation}
\begin{equation}
\begin{split}
u_{m + r} u_n u_{n - (m + r)(k + 1)} &\sum_{j = 0}^k {\frac{(-1)^jq^{(r-s)j}u_{r - s}^{k - j} { u_{m + s}}^j u_{n - (m + s) - (m + r)k + (m + r)j} }{{u_{n - (m + r)k + (m + r)j} u_{n - (m + r) - (m + r)k + (m + r)j} }}}\\
&\quad= u_{r - s} ^{k + 1} u_n  - (-1)^{k+1}q^{(r-s)(k+1)}u_{m + s}^{k + 1} u_{n - (m + r)(k + 1)}\,,
\end{split}
\end{equation}
and
\begin{equation}
\begin{split}
u_{r - s}u_nu_{n - (r - s)(k + 1)}&\sum_{j = 0}^k {\frac{q^{(r-s)j}u_{m + r}^{k-j}u_{m + s}^ju_{n + m + s - (r - s)k + (r - s)j} }{{u_{n - (r - s)k + (r - s)j} u_{n - (r - s) - (r - s)k + (r - s)j} }}}\\
&\qquad= u_{m + r}^{k+1}u_n -q^{(r-s)(k+1)}u_{m + s}^{k+1}u_{n - (r - s)(k + 1)}\,;
\end{split}
\end{equation}
and the corresponding identities involving numbers from Lucas sequences of both kinds:
\begin{equation}
\begin{split}
- q^{r - s} u_{m - r} v_n v_{n - (m - r)(k + 1)} &\sum_{j = 0}^k {\frac{{u_{r - s}^{k - j} u_{m - s}^j v_{n - m + s - (m - r)k + (m - r)j} }}{{v_{n - (m - r)k + (m - r)j} v_{n - (m - r) - (m - r)k + (m - r)j} }}}\\
&\quad= u_{r - s} ^{k + 1} v_n  - u_{m - s}^{k + 1} v_{n - (m - r)(k + 1)}\,,
\end{split}
\end{equation}
\begin{equation}
\begin{split}
u_{m - s} v_n v_{n - (m - s)(k + 1)} &\sum_{j = 0}^k {\frac{(-1)^jq^{(r-s)j}u_{r - s}^{k - j} { u_{m - r}}^j v_{n - (m - r) - (m - s)k + (m - s)j} }{{v_{n - (m - s)k + (m - s)j} v_{n - (m - s) - (m - s)k + (m - s)j} }}}\\
&\quad= u_{r - s} ^{k + 1} v_n  - (-1)^{k+1}q^{(r-s)(k+1)}u_{m - r}^{k + 1} v_{n - (m - s)(k + 1)}\,,
\end{split}
\end{equation}
\begin{equation}
\begin{split}
u_{r-s}v_nv_{n - (r - s)(k + 1)}&\sum_{j = 0}^k {\frac{q^{(r-s)j}u_{m - s}^{k-j}u_{m-r}^jv_{n + m - r - (r - s)k + (r - s)j} }{{v_{n - (r - s)k + (r - s)j} v_{n - (r - s) - (r - s)k + (r - s)j} }}}\\
&\qquad= u_{m - s}^{k+1}v_n -q^{(r-s)(k+1)}u_{m-r}^{k+1}v_{n - (r - s)(k + 1)}\,,
\end{split}
\end{equation}
\begin{equation}
\begin{split}
- q^{r - s} u_{m + s} v_n v_{n - (m + s)(k + 1)} &\sum_{j = 0}^k {\frac{{u_{r - s}^{k - j} u_{m + r}^j v_{n - m - r - (m + s)k + (m + s)j} }}{{v_{n - (m + s)k + (m + s)j} v_{n - (m + s) - (m + s)k + (m + s)j} }}}\\
&\quad= u_{r - s} ^{k + 1} v_n  - u_{m + r}^{k + 1} v_{n - (m + s)(k + 1)}\,,
\end{split}
\end{equation}
\begin{equation}
\begin{split}
u_{m + r} v_n v_{n - (m + r)(k + 1)} &\sum_{j = 0}^k {\frac{(-1)^jq^{(r-s)j}u_{r - s}^{k - j} { u_{m + s}}^j v_{n - (m + s) - (m + r)k + (m + r)j} }{{v_{n - (m + r)k + (m + r)j} v_{n - (m + r) - (m + r)k + (m + r)j} }}}\\
&\quad= u_{r - s} ^{k + 1} v_n  - (-1)^{k+1}q^{(r-s)(k+1)}u_{m + s}^{k + 1} v_{n - (m + r)(k + 1)}
\end{split}
\end{equation}
and
\begin{equation}
\begin{split}
u_{r - s}v_nv_{n - (r - s)(k + 1)}&\sum_{j = 0}^k {\frac{q^{(r-s)j}u_{m + r}^{k-j}u_{m + s}^jv_{n + m + s - (r - s)k + (r - s)j} }{{v_{n - (r - s)k + (r - s)j} v_{n - (r - s) - (r - s)k + (r - s)j} }}}\\
&\qquad= u_{m + r}^{k+1}v_n -q^{(r-s)(k+1)}u_{m + s}^{k+1}v_{n - (r - s)(k + 1)}\,.
\end{split}
\end{equation}

\hrule

\noindent 2010 {\it Mathematics Subject Classification}:
Primary 11B39; Secondary 11B37.

\noindent \emph{Keywords: }
Horadam sequence, Fibonacci number, Lucas number, Lucas sequence, summation identity.

\hrule

%\noindent Concerned with sequences: 
%A000045, A000129

%\hrule

\end{document}